\newtheorem{theorem}{Theorem}[section]
\newtheorem{proposition}[theorem]{Proposition}
\newtheorem{remark}{Remark}[section]
\newtheorem{theo}{Theorem}
\theoremstyle{definition}
\newtheorem{definition}[theorem]{Definition}
\newtheorem{notation}[theorem]{Notation}
\newcommand{\R}{\mathbb{R}}
\newcommand{\N}{\mathbb{N}}
\newcommand{\T}{\mathbb{T}}
\newcommand{\E}{\mathbb E}
\newcommand{\ent}[1]{\lceil #1\rceil}
\begin{document}
\newcommand{\Hloc}{H^{1/2-}_{\textrm{loc}}}

\title{Continuity of the flow of the Benjamin-Bona-Mahony equation on probability measures}
\author{Anne-Sophie de Suzzoni\footnote{Universit\'e Paris 13, Sorbonne Paris Cit\'e, LAGA, CNRS ( UMR 7539), 99, avenue Jean-Baptiste Cl\'ement, F-93430 Villetaneuse, France}}

\maketitle

\begin{abstract}
 We use Wasserstein metrics adapted to study the action of the flow of the BBM equation on probability measures. We prove the continuity of this flow and the stability of invariant measures for finite times.
\end{abstract}

\tableofcontents

\section{Introduction}

The aim of this paper is to extend the result of \cite{dSwav} regarding the stability of Gaussian measures under the flow of the Benjamin-Bona-Mahony equation to more general measures. 

We consider the Benjamin-Bona-Mahony (BBM) equation on the torus $\T$:
$$
\partial_t (1- \partial_x^2 )u + \partial_x  (u + \frac{u^2}{2} ) = 0 \; .
$$
It follows from the work of Bona-Chen-Saut on Boussinesq equations \cite{BCSbouII,BCSbouI} that this equation is locally well-posed in $L^2$ and from the work of Bona-Tzvetkov \cite{BTsha} that it is globally well-posed in $H^s$, $s\geq 0$. 

We are interested in the action of this flow on measures. We consider measures on $H^s$, $s> 0$. The flow $\psi(t)$ of the BBM equation is well defined and continuous (hence measurable on the topological $\sigma$-algebra) on this space. Let $\rho$ be a measure on the topological $\sigma$-algebra of $H^s$, for all $t$, we can define the image $\rho^t$ of $\rho$ under the flow $\psi(t)$, namely, for all measurable subset $A$ of $H^s$,
$$
\rho^t(A) = \rho(\psi(t)^{-1}(A)) \; .
$$
We are interested in properties of the map $\rho \mapsto \rho^t$ at fixed $t$. We consider both its continuity and the stability of invariant measures under the flow $\psi(t)$, that is, measures such that $\rho^t = \rho$.

This study of invariant measures under the flow of partial differential equations is inspired from works by Lebowitz-Rose-Speers \cite{LRSstat}, Bourgain \cite{Bper}, and Zhidkov \cite{Zoninv}. From a physical point of view, the interest resides also in the evolution of the statistics described by more general measures, we can mention the works by Peierls \cite{Pzur}, Brout-Prigogine \cite{BPstat}, and Zakharov-Filonenko \cite{ZFweak}.

The topology that we use to prove the continuity of the map $\rho \mapsto \rho^t$ is the one induced by the Wasserstein metrics:
$$
d_{s',p}(\mu, \nu) = \inf_{\gamma \in \textrm{Marg}(\mu,\nu)} \left( \int_{H^s\times H^s } \|u-v\|_{H^{s'}}^p d\gamma(u,v) \right)^{1/p}\; ,
$$
where $\textrm{Marg} (\mu, \nu)$ is the set of measures on $H^s \times H^s$ whose marginals are $\mu$ and $\nu$, $s'\leq s$ corresponds to the regularity of the space where the measures can be compared and $p$ their integrability. In other words, given a large enough probability space $(\Omega, \mathcal A, \mathbb P)$ this distance can be seen as 
$$
\inf_{(X,Y) \in M(\mu, \nu)} \|X-Y\|_{L^p(\Omega, H^{s'})}
$$
where $M(\mu, \nu)$ is the set of couples of random variables $(X,Y) : \Omega \rightarrow H^s\times H^s$ such that the law of $X$ is $\mu$ and the one of $Y$ is $\nu$. This distance corresponds to the weak convergence of the measures combined to the convergence of the moments of order $q\leq p$:
$$
\int_{H^s} \|u\|_{H^{s'}}^q d\mu(u) \; .
$$

In \cite{CdScont}, the use of these distances is motivated.

We prove the following theorems.

\begin{theo}\label{theo-result1} Let $s\in ]1/4,1[$. Let $\mu, \nu$ such that for all $q\geq 1$, the moments of order $q$ of $\mu$ and $\nu$ satisfies:
$$
\left( \int_{H^s} \|u\|_{H^{s'}}^q d\mu(u) \right)\leq C_\mu \sqrt q \; , \left( \int_{H^s} \|u\|_{H^{s'}}^q d\nu(u) \right)\leq C_\nu \sqrt q
$$
with $C_\mu, C_\nu$ independent from $q$. Let $p \geq 1$ and $p_1$ and $p_2$ such that $1/p = 1/p_1 +1/p_2$. Then, for all $t$, and all $\sigma \in ]\max(\frac12,s), \min (1,2s)[$, we have 
$$
d_{0,p} (\mu^t,\nu^t) \leq C(\mu, \nu, t,p_1,\sigma) d_{s,p_2}(\mu, \nu)
$$
with
\begin{multline*}
C(\mu,\nu,t,p_1,\sigma) = C \big\| \Big( 1 + (CT \|u_{0,1}\|_{H^s})^{(\sigma - s)/s} \Big) e^{T\Big( 1 + (CT \|u_{0,1}\|_{H^s})^{\sigma /s}\Big)}\big\|_{L^{2p_1}_\mu} \times \\
 \big\| \Big( 1 + (CT \|u_{0,2}\|_{H^s})^{(\sigma - s)/s} \Big) e^{T\Big( 1 + (CT \|u_{0,2}\|_{H^s})^{\sigma /s}\Big)}\big\|_{L^{2p_1}_\nu} \; .
\end{multline*}
where $T = 1+|t|$ and 
$$
\|F(u_{0,1})\|_{L^q_\mu} = \Big(\int_{H^s} F(u_{0,1})^q d\mu(u_{0,1}) \Big)^{1/q} \; ,\;  \|F(u_{0,2})\|_{L^q_\nu} = \Big(\int_{H^s} F(u_{0,2})^q d\mu(u_{0,2}) \Big)^{1/q}\; .
$$
\end{theo}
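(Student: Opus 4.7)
\emph{Step 1: Reduction to a pointwise stability estimate.} Since $\psi(t)$ is continuous on $H^s$, any coupling $\gamma_0 \in \textrm{Marg}(\mu,\nu)$ pushes forward under $\psi(t)\times\psi(t)$ to a coupling of $\mu^t$ and $\nu^t$, so
\[
d_{0,p}(\mu^t,\nu^t)^p \le \int \|\psi(t) u_{0,1} - \psi(t) u_{0,2}\|_{L^2}^p\, d\gamma_0(u_{0,1},u_{0,2}).
\]
It therefore suffices to prove a deterministic pointwise bound of the form
\[
\|\psi(t) u_{0,1} - \psi(t) u_{0,2}\|_{L^2} \le G(u_{0,1})\, G(u_{0,2})\, \|u_{0,1} - u_{0,2}\|_{H^s}
\]
with $G(u_0) = (1 + (CT\|u_0\|_{H^s})^{(\sigma-s)/s})\, e^{T(1+(CT\|u_0\|_{H^s})^{\sigma/s})}$, and then apply H\"older's inequality on the integral.

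\emph{Step 2: The difference equation and $L^2$ energy.} Setting $U = \psi(t) u_{0,1}$, $V = \psi(t) u_{0,2}$, $W = U - V$, the equation
\[
\partial_t W + (1-\partial_x^2)^{-1}\partial_x\bigl(W + \tfrac{1}{2}(U+V) W\bigr) = 0
\]
holds. The operator $(1-\partial_x^2)^{-1}\partial_x$ is skew-adjoint on $L^2$, so the pure linear part contributes nothing to $\tfrac{d}{dt}\|W\|_{L^2}^2$; for the quadratic term, the Sobolev embedding $H^\sigma \hookrightarrow L^\infty$ (requiring $\sigma>\tfrac12$) and paraproduct estimates yield $\tfrac{d}{dt}\|W\|_{L^2} \lesssim (1 + \|U+V\|_{H^\sigma})\|W\|_{L^2}$. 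Gronwall then gives
\[
\|W(t)\|_{L^2} \le \|u_{0,1}-u_{0,2}\|_{L^2}\,\exp\Bigl(C\int_0^t\bigl(1 + \|\psi(s)u_{0,1}\|_{H^\sigma} + \|\psi(s)u_{0,2}\|_{H^\sigma}\bigr) ds\Bigr).
\]

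\emph{Step 3: Quantitative $H^\sigma$ bound on the flow.} This is the core difficulty: for $u_0 \in H^s$ with $s<\sigma$, the norm $\|\psi(s)u_0\|_{H^\sigma}$ is a priori infinite. The strategy is to split $u_0 = P_{\le N} u_0 + P_{>N} u_0$ at a threshold $N \sim (CT\|u_0\|_{H^s})^{1/s}$, to control the flow of the truncated smooth part by global $H^s$-wellposedness combined with Bernstein's inequality $\|P_{\le N} u_0\|_{H^\sigma} \le N^{\sigma-s}\|u_0\|_{H^s}$, and to exploit the one-derivative smoothing of $(1-\partial_x^2)^{-1}\partial_x$ on the Duhamel term together with the Sobolev product $H^s\cdot H^s \subset H^{2s-1/2}$ (requiring $\sigma<2s$) to control the high-frequency correction. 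Optimising in $N$ yields
\[
\int_0^t \|\psi(s)u_0\|_{H^\sigma}\, ds \lesssim T\bigl(1 + (CT\|u_0\|_{H^s})^{\sigma/s}\bigr),
\]
producing the exponent $\sigma/s$ appearing in the statement. A parallel frequency-split refinement upgrades the crude bound $\|u_{0,1}-u_{0,2}\|_{L^2} \le \|u_{0,1}-u_{0,2}\|_{H^s}$ at the price of the polynomial prefactor $1 + (CT\|u_0\|_{H^s})^{(\sigma-s)/s}$, and splitting the total exponential as a product of two symmetric factors depending only on $u_{0,1}$ and $u_{0,2}$ separately yields the desired factored form $G(u_{0,1})G(u_{0,2})$.

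\emph{Step 4: H\"older and sub-Gaussian tails.} Using the partition $\tfrac1p = \tfrac{1}{2p_1} + \tfrac{1}{2p_1} + \tfrac{1}{p_2}$, H\"older's inequality applied to $\int G(u_{0,1})^p G(u_{0,2})^p \|u_{0,1}-u_{0,2}\|_{H^s}^p\, d\gamma_0$ separates the three factors; taking the $p$th root and then the infimum over $\gamma_0$ converts the last factor into $d_{s,p_2}(\mu,\nu)$. The moment hypothesis $(\int\|u\|_{H^s}^q d\mu)^{1/q}\le C_\mu\sqrt{q}$ makes $\mu$ sub-Gaussian, and since $\sigma/s<2$ (guaranteed by $\sigma<2s$) the exponential $\exp(C\|u_0\|_{H^s}^{\sigma/s})$ is $\mu$-integrable to any finite power, so $\|G\|_{L^{2p_1}_\mu}<\infty$ and similarly for $\nu$. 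The principal obstacle throughout is Step 3: establishing the sharp growth bound with exponent $\sigma/s$ on the $H^\sigma$ norm of the flow despite the data only belonging to $H^s$. The range $\max(\tfrac12,s)<\sigma<\min(1,2s)$ encodes precisely the three conditions needed: $\sigma>\tfrac12$ for the $L^\infty$ embedding, $\sigma\le 1$ for the one-derivative gain of $(1-\partial_x^2)^{-1}\partial_x$ to close the estimate, and $\sigma<2s$ for the nonlinear product $u^2\in H^{2s-1/2}$ to reach the $H^\sigma$ threshold.
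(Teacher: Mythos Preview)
Your outer framework (Steps 1 and 4: coupling pushforward, H\"older with exponents $\tfrac{1}{p}=\tfrac{1}{2p_1}+\tfrac{1}{2p_1}+\tfrac{1}{p_2}$, sub-Gaussian integrability) matches the paper. The gap is in Steps 2--3. Your direct $L^2$ energy estimate on $W=U-V$ gives a Gronwall factor $\exp\bigl(C\int_0^t\|\psi(\tau)u_{0,i}\|_{H^\sigma}\,d\tau\bigr)$, and you then claim $\int_0^t\|\psi(\tau)u_0\|_{H^\sigma}\,d\tau \lesssim T(1+(CT\|u_0\|_{H^s})^{\sigma/s})$. This is false: the BBM flow has no smoothing on the linear part, so for $u_0\in H^s\setminus H^\sigma$ one has $\psi(\tau)u_0\notin H^\sigma$ for every $\tau$, and the integral is infinite. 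No choice of frequency threshold $N$ repairs this, because you have already committed to the full $H^\sigma$ norm of the full solution in the Gronwall exponent.

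The paper avoids this by performing the frequency split \emph{before} any energy estimate. One writes $u_i=v_i+w_i$ where $v_i$ solves BBM with datum $(1-\Pi_N)u_{0,i}$ (small in $L^2$, handled by local well-posedness on $[-T,T]$) and $w_i=u_i-v_i$ solves a perturbed BBM with smooth datum $\Pi_N u_{0,i}\in H^1$. The energy estimate is then run in $H^1$ on $w=w_1-w_2$, using $\partial_t\|w\|_{H^1}^2=2\int(\partial_x w)(wu_1+(v_1-v_2)w_2)$. The point is that $\partial_x w$ absorbs into $\|w\|_{H^1}$ and the $L^\infty$ norm lands on $w$ or $w_2$ (which are genuinely in $H^\sigma$), while the rough factor $u_1$ only appears in $L^2$. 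Thus the Gronwall exponent is $\int_0^t\|u_1(\tau)\|_{L^2}\,d\tau$, which is finite and bounded via Proposition~\ref{prop-globonu}. A related misreading: your explanation that $\sigma<2s$ comes from the product rule $H^s\cdot H^s\subset H^{2s-1/2}$ is not what happens in the paper; there the deterministic estimates hold for any $\sigma\in(1/2,1]$, and the constraint $\sigma<2s$ enters only at the probabilistic step, precisely to ensure $\sigma/s<2$ so that $e^{c\|u_0\|_{H^s}^{\sigma/s}}$ is integrable against a sub-Gaussian measure.
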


In other words, $\rho \mapsto \rho^t$ is locally Lipschitz continuous for the distances $d_{0,p}$ and $d_{s,p_2}$ in the set of measures satisfying certain constraints on their moments.

\begin{theo}\label{theo-result2}Let $s\in]1/3,1[$. Let $\rho$ be an invariant measure under the flow of BBM such that there exists $C$ such that for all $q\geq 1$,
$$
\left( \int \|u\|_{H^s}^qd\rho(u) \right)^{1/q} \leq C\sqrt q \; .
$$
Let $p_0 > 1$ and let $\mu $ be a measure on $H^s$ such that 
$$
\int \|u\|_{H^s}^{p_0} d\mu(u) < \infty \; .
$$
Let $p_1,p_2 \leq p_0$ and $p$ such that $1/p = 1/p_1 +1/p_2 \leq 1$. Then, for all $t \in \R$, all $\sigma \in ]max(s,\frac12) , \min (1, \frac23 s)[$
$$
d_{0,p} (\mu^t, \rho) \leq C(p_1,t,\mu, \sigma) d_{s,p_2} (\mu,\rho)
$$
where
$$
C(p_1,t,\mu,\sigma) =  C \Big( 1 + C T^{(\sigma - s)/s} ( \sqrt{\frac{p_1 (\sigma -s )}{s}} + \|u_{0,2}\|_{L^{2p_1(\sigma -s)/s},H^s}^{(\sigma -s)/s}\Big)  e^{c T^2 p_1}
$$
and $T = 1+|t|$.

In particular, if there exists $C$ such that for all $q\geq 1$, 
$$
d_{s,q} (\mu, \rho) \leq C^q \varepsilon \; .
$$
then for all $t$, there exists $C(t, \mu)$ such that
$$
d_{0, q} (\mu^t,\rho) \leq C(t, \mu)^q \varepsilon \; .
$$
In other words $\rho$ is stable in the set of measures $\mu$ such that there exists $\delta > 0$ satisfying
$$
 1_{\|u\|_{H^s}\geq 1}e^{\delta (\ln \|u\|_{H^s})^2} \in L^1_\mu \; .
$$
\end{theo}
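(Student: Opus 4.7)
The plan is to adapt the coupling/Gronwall argument underpinning Theorem~\ref{theo-result1}, crucially exploiting the invariance $\rho^t = \rho$ to dispense with the Gaussian moment requirement on the first measure. I would begin by choosing a near-optimal coupling $(u_{0,1}, u_{0,2}) \sim \gamma$ for $d_{s,p_2}(\mu, \rho)$ and pushing it forward by the flow: set $u_j(t) := \psi(t) u_{0,j}$. Since $\rho$ is invariant, $u_2(t)$ still has law $\rho$, so $(u_1(t), u_2(t))$ couples $(\mu^t, \rho)$ and $d_{0,p}(\mu^t, \rho) \leq \bigl\| \|u_1(t) - u_2(t)\|_{L^2} \bigr\|_{L^p(\gamma)}$.

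Next, I would derive an $L^2$ energy estimate for $w := u_1 - u_2$. From $\partial_t w = -(1-\partial_x^2)^{-1}\partial_x\bigl(w + \tfrac{u_1+u_2}{2}w\bigr)$, taking the $L^2$ inner product with $w$ --- the linear piece vanishing by skew-symmetry of $(1-\partial_x^2)^{-1}\partial_x$, the nonlinear piece being controlled via boundedness of $\partial_x(1-\partial_x^2)^{-1}$ on $L^2$ together with $H^\sigma \hookrightarrow L^\infty$ (valid for $\sigma > 1/2$) --- yields
$$\|w(t)\|_{L^2} \leq \|w(0)\|_{H^s}\,\exp\Bigl(C\int_0^t \bigl(\|u_1(\tau)\|_{H^\sigma} + \|u_2(\tau)\|_{H^\sigma}\bigr)\,d\tau\Bigr).$$
Raising to the power $p$ and applying H\"older on $\gamma$ with $1/p = 1/p_1 + 1/p_2$ separates the initial distance (controlled by $d_{s,p_2}(\mu,\rho)$) from an exponential moment factor. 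Bounding that factor is the heart of the argument.

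The $u_2$ trajectory is the easy case: by invariance $\psi(\tau)u_{0,2} \sim \rho$ for every $\tau$, and combining the dispersive/flow bound $\|\psi(\tau) v\|_{H^\sigma} \leq (CT\|v\|_{H^s})^{\sigma/s}$ (extracted from the proof of Theorem~\ref{theo-result1}) with the Gaussian moments of $\rho$ produces a finite exponential moment of the $u_2$-contribution, of order $e^{cT^2 p_1}$ (recalling $\sigma/s < 2$). The hard part will be the $u_1$-contribution: under $\mu$ only $L^{p_0}$ moments of $\|u_{0,1}\|_{H^s}$ exist, so the direct analogue of the $u_2$ estimate gives infinity. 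I would circumvent this by not applying the flow bound to $u_1$ on its own, but instead substituting $\|u_{0,1}\|_{H^s} \leq \|u_{0,2}\|_{H^s} + \|u_{0,1}-u_{0,2}\|_{H^s}$, splitting the exponential into two pieces, and re-applying H\"older. One piece reduces to an exponential moment under $\rho$; the other involves $\|u_{0,1}-u_{0,2}\|_{H^s}^{(\sigma-s)/s}$, whose $L^{2p_1(\sigma-s)/s}$ norm is finite thanks to the near-optimal coupling. The $\sqrt{p_1(\sigma-s)/s}$ term would arise from the sub-Gaussian bound $\|u_{0,2}\|_{L^q_\rho, H^s} \leq C\sqrt{q}$ applied to one of the pieces, and the stricter range on $\sigma$ (compared to Theorem~\ref{theo-result1}) reflects precisely the additional loss incurred by this splitting.

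Finally, for the exponential-in-$q$ consequence, I would observe that $C(q,t,\mu,\sigma)$ grows only like $C(t,\mu)^q$ in $q$ (the $e^{cT^2 q}$ factor dominating), so any initial decay $d_{s,q}(\mu,\rho) \leq C^q\varepsilon$ propagates directly to $d_{0,q}(\mu^t,\rho) \leq C(t,\mu)^q\varepsilon$. The characterization of the stability domain via the quasi-Gaussian tail $1_{\|u\|_{H^s}\geq 1} e^{\delta(\ln\|u\|_{H^s})^2} \in L^1_\mu$ is a standard Orlicz-type equivalence with having $L^q$-moments that grow like $C^q$.
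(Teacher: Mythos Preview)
Your splitting trick for the $\mu$-side of the exponential does not close. After applying the flow bound and the triangle inequality $\|u_{0,1}\|_{H^s}\le \|u_{0,2}\|_{H^s}+\|u_{0,1}-u_{0,2}\|_{H^s}$, the exponential factor becomes (up to constants) $\exp\bigl(CT\,\|u_{0,1}-u_{0,2}\|_{H^s}^{\sigma/s}\bigr)$, \emph{not} a polynomial of order $(\sigma-s)/s$ in the difference. Controlling the $L^{p_1}_\gamma$ norm of this piece would require exponential moments of $\|u_{0,1}-u_{0,2}\|_{H^s}$ under the coupling, which a near-optimal coupling for $d_{s,p_2}$ cannot provide. (There is also a secondary issue: the raw energy inequality you write involves $\|u_j(\tau)\|_{H^\sigma}$ with $\sigma>s$, which is generically infinite for $H^s$ data; the flow does not gain regularity, so a high--low decomposition is needed before any such norm appears.)

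What the paper does instead is use an \emph{asymmetric} deterministic estimate (Proposition~\ref{prop-diff}): the exponential factor is $e^{c\int_0^t \|u_1(\tau)\|_{L^2}\,d\tau}$, involving only one of the two solutions, while both initial data appear only in the polynomial prefactor $1+(CT\|u_{0,1}\|_{H^s})^{(\sigma-s)/s}+(CT\|u_{0,2}\|_{H^s})^{(\sigma-s)/s}$. One then assigns $u_1$ to the invariant measure $\rho$ (opposite to your labeling). Invariance, together with Jensen and Minkowski, turns the exponential into $\|e^{cT\|u_{0,1}\|_{L^2}}\|_{L^{2p_1}_\rho}$, which is finite by the Gaussian moments of $\rho$ and yields the $e^{cT^2p_1}$ factor. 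The measure $\mu$ then enters only through $\|(CT\|u_{0,2}\|_{H^s})^{(\sigma-s)/s}\|_{L^{2p_1}_\mu}=(CT)^{(\sigma-s)/s}\|u_{0,2}\|_{L^{2p_1(\sigma-s)/s}_\mu}^{(\sigma-s)/s}$, and the restriction $\sigma<3s/2$ is exactly what forces $2p_1(\sigma-s)/s\le p_1\le p_0$ so that this moment is available. The missing idea in your sketch is this asymmetry of the deterministic bound and the choice to place the sole exponential on the invariant side.
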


\begin{remark} In \cite{dSwav}, an invariant measure $\rho_0$ on $H^s$ with $s< \frac12$ is built. This measure is a Gaussian random variable on $H^s$ whose covariance operator is $L^2 = (1-\partial_x^2)^{-1}$. This measure is perturbed by considering other Gaussian variables on $H^s$ with covariance operators $L(1+V)L$ with $V \ll 1$. It is proved that $\rho_0$ is stable in the set of such Gaussian variables with $V$ small enough in the topology corresponding to the weak convergence of measures. Here, we consider a bigger set of measures - any measure with a finite $p_0$ moment - , a stronger topology for the measures, and the proof does not use the same tools. \end{remark}

\begin{remark} The loss of derivative in the result of continuity is needed in order to make the constant $C(\mu,\nu,t,p_1,\sigma)$ finite. In the result of stability (Theorem \ref{theo-result2}), it can be replaced by a stronger hypothesis on the integrability of $\mu$. Namely, in Theorem \ref{theo-result2}, it is possible to take any $s>0$ provided that $p_0 \geq 2p_1(\sigma -s)/s$. The loss in integration (the fact that $p>p_2$) in both result is due to H\"older inequalities and we do not know how it could be avoided. \end{remark}

\begin{remark} The result should be better considering measures on $H^s$ with $s\geq 1$ thanks to the invariance of the $H^1$ norm under the flow of BBM. The reason why we consider $s<1$ is that we want to discuss the stability of the known invariant measure, which is on $H^s$, $s<1/2$. We do not know invariant measures of BBM defined on $H^1$, except for trivial ones, such as the one concentrated on $0$.\end{remark}

The proof of these results consists in proving a deterministic global control, Proposition \ref{prop-diff}, on the $L^2$ norm of the difference between two solutions of BBM,
$$
\psi(t)u_{0,1} - \psi(t)u_{0,2} \; ,
$$
and then integrate the obtained inequality on the probability space where $u_{0,1}$ is $\rho$ or $\nu$ typical, and $u_{0,2}$ is $\mu$ typical. 

To prove the stability theorem, we use the invariance in the proof, which makes the result better in terms of hypothesis on $\mu$ than the continuity one.

\paragraph{Organisation of the paper}

In Section 2, we prove global estimates on $\psi(t)u_0$ and $\psi(t)u_{0,1} - \psi(t) u_{0,2}$. 

In Section 3, we define the space of measures on which we prove Theorems \ref{theo-result1}, \ref{theo-result2} and give alternative definitions or point of views of these spaces using large deviation estimates. Then, we prove Theorems \ref{theo-result1}, \ref{theo-result2}.

\section{Deterministic estimates}

Through all this paper, we use the fact that the BBM equation is locally well-posed in $L^2$ according to the following proposition, that comes from \cite{BCSbouII,BCSbouI}.

\begin{proposition}[from \cite{BCSbouII,BCSbouI}]\label{prop-lwp} There exists $C$ such that for all $\Lambda > 0$, the BBM equation is well-posed in $\mathcal C ([-T,T], L^2(\T))$ with $T= \frac1{C\Lambda}$ for initial data $u_0$ such that $\|u_0\|_{L^2} \leq \Lambda$. In particular, calling $u_{1}$ and $u_2$ the unique solutions with respective initial datum $u_{0,1}, u_{0,2}$, we have 
$$
\|u_i\|_{L^\infty([-T,T],L^2(\T))} \leq C \Lambda \mbox{ and }  \|u_1 - u_2 \|_{L^\infty([-T,T],L^2(\T))}\leq C \|u_{0,1} - u_{0,2}\|_{L^2}\; .
$$
\end{proposition}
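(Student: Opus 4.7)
The plan is to recast the BBM equation as a fixed point problem via Duhamel and run a contraction mapping argument in $C([-T,T],L^2(\T))$ on a ball of radius proportional to $\Lambda$. Applying $(1-\partial_x^2)^{-1}$ to the equation, the BBM equation becomes
$$
\partial_t u + L u + \tfrac12 L(u^2) = 0, \qquad L := (1-\partial_x^2)^{-1}\partial_x,
$$
so that any mild solution satisfies
$$
u(t) = u_0 - \int_0^t \Big( L u(s) + \tfrac12 L(u^2(s)) \Big)\, ds =: \Phi(u)(t).
$$
I would therefore work with $\Phi$ on the closed ball
$B_{2\Lambda} = \{ u \in C([-T,T], L^2(\T)) : \|u\|_{L^\infty_T L^2} \leq 2\Lambda\}$.

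The first key step is to establish the two mapping properties of $L$ that drive everything: since on $\T$ the Fourier multiplier of $L$ is $in/(1+n^2)$, which is bounded in $n$, one has $\|Lf\|_{L^2} \lesssim \|f\|_{L^2}$; and since the kernel $\partial_x G$ of $L$ is in $L^2(\T)$, Young's inequality gives $\|Lf\|_{L^2} \lesssim \|f\|_{L^1}$. Applying the second bound with $f=u^2$ yields $\|L(u^2)\|_{L^2} \lesssim \|u\|_{L^2}^2$, and this is the only nonlinear estimate needed. Combined with the linear bound, I obtain
$$
\| L u + \tfrac12 L(u^2) \|_{L^2} \leq C\big( \|u\|_{L^2} + \|u\|_{L^2}^2 \big).
$$

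With this in hand, the contraction argument is routine. For $u \in B_{2\Lambda}$,
$\|\Phi(u)\|_{L^\infty_T L^2} \leq \Lambda + C T (2\Lambda + 4\Lambda^2) \leq 2\Lambda$ as soon as $T \leq 1/(C\Lambda)$ with $C$ large enough (the small-$\Lambda$ case causes no trouble since we only claim a bound with a universal constant $C$; one can always shrink $T$ or enlarge $C$). For the difference, using $u^2 - v^2 = (u+v)(u-v)$ and again $\|L((u+v)(u-v))\|_{L^2} \lesssim \|(u+v)(u-v)\|_{L^1} \leq \|u+v\|_{L^2}\|u-v\|_{L^2}$, one finds
$$
\|\Phi(u)-\Phi(v)\|_{L^\infty_T L^2} \leq CT\big( 1 + 4\Lambda \big)\, \|u-v\|_{L^\infty_T L^2},
$$
so a further restriction of $T$ of the form $T \leq 1/(C\Lambda)$ gives a strict contraction. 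Banach's fixed point theorem produces the unique solution and the $L^\infty_T L^2$ bound stated. The Lipschitz dependence on initial data is obtained by the same Duhamel estimate applied to two solutions: from
$$
\|u_1-u_2\|_{L^\infty_T L^2} \leq \|u_{0,1}-u_{0,2}\|_{L^2} + CT(1+4\Lambda)\|u_1-u_2\|_{L^\infty_T L^2},
$$
one absorbs the second term on the left once $T \leq 1/(2C(1+4\Lambda))$, obtaining the desired inequality with a universal $C$.

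There is no real obstacle here; the only point that deserves care is the nonlinear bound $\|L(u^2)\|_{L^2}\lesssim \|u\|_{L^2}^2$, which relies on $L$ being smoothing enough to map $L^1 \to L^2$ on the torus. The required kernel computation for $\partial_x G$, where $G$ is the Green's function of $1-\partial_x^2$ on $\T$, is standard (the one-dimensional Green's function is explicit and bounded, with a jump derivative in $L^1\cap L^2$). Everything else is the usual Picard iteration scheme, and the relation $T = 1/(C\Lambda)$ for a sufficiently large universal constant $C$ emerges directly from the competition between the quadratic term $T\Lambda^2$ and the radius $\Lambda$ in the stability estimate for $\Phi$.
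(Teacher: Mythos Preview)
The paper does not supply its own proof of this proposition; it is quoted from \cite{BCSbouII,BCSbouI} and used as a black box. Your contraction-mapping outline via Duhamel and the smoothing estimate $\|L f\|_{L^2}\lesssim\|f\|_{L^1}$ is exactly the right mechanism, and the nonlinear bound $\|L(u^2)\|_{L^2}\lesssim\|u\|_{L^2}^2$ is the correct key step.

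There is, however, one genuine gap in the quantitative part. You place the linear term $Lu$ inside the Duhamel integral, which produces a contribution $CT\Lambda$ in the stability estimate. Your claim that $\Lambda+CT(2\Lambda+4\Lambda^2)\le 2\Lambda$ ``as soon as $T\le 1/(C\Lambda)$'' is not correct: the inequality requires $CT(2+4\Lambda)\le 1$, which for small $\Lambda$ forces $T\lesssim 1$, not $T\lesssim 1/\Lambda$. The dismissal of the small-$\Lambda$ case is therefore unjustified, and this is precisely the regime the paper uses: in Propositions~\ref{prop-globonu} and \ref{prop-diff} one applies the result with $\Lambda=\|(1-\Pi_N)u_0\|_{L^2}\le 1/(CT)$ and needs the solution to persist on the full interval $[-T,T]$ with $T\ge 1$ arbitrary.

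The fix is immediate: since the Fourier multiplier $in/(1+n^2)$ of $L$ is purely imaginary, the linear propagator $e^{-tL}$ is unitary on $L^2(\T)$. Writing the Duhamel formula as
\[
u(t)=e^{-tL}u_0-\tfrac12\int_0^t e^{-(t-s)L}L\big(u^2(s)\big)\,ds
\]
removes the linear term from the perturbative part. One then gets $\|\Phi(u)\|_{L^\infty_T L^2}\le \Lambda + CT(2\Lambda)^2$ and $\|\Phi(u)-\Phi(v)\|_{L^\infty_T L^2}\le CT\cdot 4\Lambda\,\|u-v\|_{L^\infty_T L^2}$, so the condition $T\le 1/(C\Lambda)$ alone suffices for both stability and contraction, uniformly in $\Lambda>0$. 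With this adjustment your argument is complete and matches the standard proof.
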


Besides, we also use the fact that BBM is globally well-posed in $H^s$, as was proved in \cite{BTsha} .

\subsection{On a solution of BBM}

\begin{proposition}\label{prop-globonu} Let $T\geq 1$ and $u_0 \in H^s$. For all $N \in \N$ such that $N\geq (CT \|u_0\|_{H^s})^{1/s}$, and all $\sigma > \frac12$, we have that for all $t \in [-T,T]$, the solution $u$ of BBM with initial datum $u_0$ satisfies
$$
\|u(t)\|_{L^2} \leq C \Big( \frac1{T} + N^{\sigma - s }\|u_0\|_{H^s}\Big) 
$$
with $C$ independent from $u_0$ and $T$.
\end{proposition}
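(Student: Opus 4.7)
The plan is to exploit the high-frequency smoothing inherent in the BBM nonlinearity, via a Fourier-side Duhamel formula and a frequency cutoff at $N$. Set $\Lambda := \|u_0\|_{H^s}$. Writing BBM on the Fourier side as
$$\partial_t \hat u(k,t) = -\frac{ik}{1+k^2}\hat u(k,t) - \frac{ik}{2(1+k^2)}\widehat{u^2}(k,t)$$
and integrating by Duhamel, the oscillating phase has modulus one, so
$$|\hat u(k,t)|\leq |\hat u(k,0)| + \frac{|k|}{2(1+k^2)}\int_0^t|\widehat{u^2}(k,\tau)|\,d\tau.$$
The crucial point is that the multiplier $|k|/(1+k^2)\leq \min(1, 1/|k|)$ provides a $1/|k|$ gain at high frequencies, which translates into a factor $1/N$ on $\{|k|>N\}$.

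Split $u(t) = P_N u(t) + P^N u(t)$. Using Minkowski's integral inequality and the $1/N$ smoothing on $|k|>N$, I obtain
$$\|P^N u(t)\|_{L^2}\leq \|P^N u_0\|_{L^2} + \frac{C}{N}\int_0^t\|u^2(\tau)\|_{L^2}\,d\tau\leq N^{-s}\Lambda + \frac{C}{N}\int_0^t \|u(\tau)\|_{L^4}^2\,d\tau.$$
For $|k|\leq N$, measuring in $H^\sigma$ with $\sigma\leq 1$ so that $(1+k^2)^{\sigma-1}\leq 1$, the same computation yields
$$\|P_N u(t)\|_{H^\sigma}\leq C\|P_N u_0\|_{H^\sigma} + C\int_0^t\|u^2(\tau)\|_{L^2}\,d\tau\leq C N^{\sigma - s}\Lambda + C\int_0^t\|u(\tau)\|_{L^4}^2\,d\tau.$$
Combining the two via $\|P_N u\|_{L^2}\leq \|P_N u\|_{H^\sigma}$ gives the central integral inequality
$$\|u(t)\|_{L^2}\leq C N^{\sigma - s}\Lambda + C\int_0^t\|u(\tau)\|_{L^4}^2\,d\tau.$$

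To close the argument I would bound $\|u\|_{L^4}^2$ via the 1D Sobolev embedding $H^{1/4}(\T)\hookrightarrow L^4(\T)$, which under the hypothesis $s>1/4$ used in the subsequent theorems gives $\|u\|_{L^4}^2\leq C\|u\|_{H^s}^2$, and run the parallel Fourier estimate with $\sigma$ replaced by $s$, obtaining $\|u(t)\|_{H^s}\leq C\Lambda + C\int_0^t\|u(\tau)\|_{H^s}^2\,d\tau$. A quadratic Gronwall then yields $\|u(t)\|_{H^s}\leq 2C\Lambda$ on intervals of length $\sim 1/\Lambda$; iterating this across $[-T,T]$ and using the threshold $N\geq (CT\Lambda)^{1/s}$ (equivalently $TN^{-s}\Lambda\leq 1/C$) to absorb the cumulative nonlinear cost into $CN^{\sigma-s}\Lambda$ produces the stated bound, with the $1/T$ term compensating for boundary losses during the iteration.

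The main difficulty is globalizing the bootstrap: the quadratic Gronwall gives only local-in-time control of $\|u\|_{H^s}$, and passing from one sub-interval to the next requires careful bookkeeping of how the $H^s$ norm evolves, relying precisely on the condition $N\geq(CT\Lambda)^{1/s}$ to keep the accumulated nonlinear contribution comparable to $CN^{\sigma-s}\Lambda$ throughout $[-T,T]$.
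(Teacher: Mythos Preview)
Your approach has a genuine gap at the globalization step. The quadratic integral inequality
\[
\|u(t)\|_{H^s}\leq C\Lambda + C\int_0^t \|u(\tau)\|_{H^s}^2\,d\tau
\]
only yields control on a time interval of length $\sim 1/\Lambda$; iterating it across $O(T\Lambda)$ sub-intervals forces the bound to at least double at each step, producing growth of order $2^{CT\Lambda}$ rather than the polynomial $N^{\sigma-s}\Lambda\sim (T\Lambda)^{(\sigma-s)/s}\Lambda$ you need. The threshold $N\geq(CT\Lambda)^{1/s}$ cannot help here: once you set $\sigma=s$ the parameter $N$ has disappeared from your $H^s$ inequality, so there is nothing left to absorb the accumulated nonlinear cost. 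In short, splitting the \emph{solution} at each time and running Duhamel yields an unavoidable quadratic feedback.

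The paper's proof avoids this by splitting the \emph{initial datum} instead and evolving the two pieces by different equations. One sets $v_0=(1-\Pi_N)u_0$, so $\|v_0\|_{L^2}\leq N^{-s}\|u_0\|_{H^s}\leq 1/(CT)$; the local well-posedness then furnishes $v$ on the \emph{whole} interval $[-T,T]$ with $\|v(t)\|_{L^2}\lesssim 1/T$ --- this is exactly where the condition on $N$ is spent. The remainder $w=u-v$ solves a perturbed BBM equation with smooth datum $\Pi_N u_0\in H^1$, and the key observation is the $H^1$ energy identity: since $\int w\,\partial_x w=0$ and $\int w\,\partial_x w^2=0$, the cubic self-interaction drops out and one is left with
\[
\partial_t\|w\|_{H^1}\leq C\|w\|_{L^\infty}\|v\|_{L^2}\leq \frac{C}{T}\|w\|_{H^\sigma},
\]
a \emph{linear} Gronwall whose exponent over $[-T,T]$ is $O(1)$. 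This is the structural cancellation your Duhamel argument never sees, and it is what makes the bound global rather than local.
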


\begin{notation} We call $\Pi_N$ the orthogonal projection on 
$$
\mbox{Vect }\Big( \cos(nx) , n=0,\hdots, N, \sin(nx) , n=1 , \hdots, N\Big) \; .
$$
\end{notation}

\begin{proof} Set $N$ such that $N\geq (CT \|u_0\|_{H^s})^{1/s}$. We have that
$$
\|(1-\Pi_N) u_0 \|_{L^2} \leq N^{-s} \|u_0\|_{H^s} \leq \frac1{CT} \; .
$$
We can apply the local well-posedness proposition (Proposition \ref{prop-lwp}) for the initial datum $v_0 = (1-\Pi_N) u_0$. There exists a unique solution $v$ in $[-T,T] $ of BBM with initial datum $v_0$. Besides, $v$ satisfies for all $t\in [-T,T]$;
$$
\|v(t)\|_{L^2} \leq C' \frac1{CT} \lesssim \frac1T \; .
$$
Writing $u$ the solution of BBM with initial datum $u_0$, we call $w = u-v$. This function satisfies 
$$
\partial_t (1-\partial_x^2) w + \partial_x \Big( w + vw + \frac{w^2}{2} \Big)
$$
with initial datum $w_0 = \Pi_N u_0$. As $w_0$ is in $H^1$, it has been proved in \cite{BTsha} that $w \in H^1$ for the times $[-T,T]$. We compute estimates on $\|w\|_{H^\sigma}$ with $\sigma \in ]1/2,1]$. We start by differentiating $\|w\|_{H^1}^2$ with respect to time :
\begin{eqnarray*}
\partial_t \|w\|_{H^1}^2  &= &\partial_t \int w (1-\partial_x^2) w \\
 &=& 2 \int w \partial_t (1-\partial_x^2) w \\
 & = & -2 \int w\partial_x \Big( w + vw + \frac{w^2}{2}\Big) \; .
\end{eqnarray*}
Using that $\int w\partial_x w = 0$ and $\int w \partial_x w^2 = 0$, we keep only the term $\int w \partial_x (vw)$. Because $\partial_x$ is skew symmetric, we have 
\begin{eqnarray*}
2 \|w\|_{H^1}\partial_t \|w\|_{H^1} &=& 2 \int (\partial_x w) vw \\
\partial_t \|w\|_{H^1} & \leq & \|w\|_{L^\infty} \|v\|_{L^2} \; .
\end{eqnarray*}
Using the Sobolev embedding $H^\sigma \subset L^\infty$, ($\sigma > 1/2$) , the fact that $\sigma $ is less than $1$, and that $\|v\|_{L^2} \lesssim T^{-1}$, we get
$$
\|w(t)\|_{H^\sigma} \leq \|w(t)\|_{H^1} \lesssim \frac1{T} \int_{0}^t \|w(\tau)\|_{H^\sigma} \; .
$$
Using Gronwall lemma, we get
$$
\|w(t)\|_{H^\sigma} \leq e^{c\frac{|t|}{T}} \|w_0\|_{H^\sigma} \; .
$$
We use that $|t| \leq T$ and $\|w_0\|_{H^\sigma} \leq N^{\sigma-s}\|u_0\|_{H^s}$ to conclude :
$$
\|u(t)\|_{L^2} \leq \|v(t)\|_{L^2} + \|w(t)\|_{H^\sigma} \leq \frac{1}{CT} + C N^{\sigma-s} \|u_0\|_{H^s} \; .
$$
\end{proof}

\subsection{On the difference of two solutions}

In this subsection, we estimate the difference between two solutions of BBM with the difference between the initial datum.

\begin{proposition}\label{prop-diff}Let $T \geq 1$, $u_{0,1} \in H^s$, $u_{0,2} \in H^s$ and $\sigma \in ]1/2,1]$. We call $u_i$ the solution of BBM with initial datum $u_{0,i}$. We have that for all $t \in [-T,T]$, $u_1-u_2$ satisfies 
$$
\|u_1(t) - u_2(t)\|_{L^2} \leq C \Big( 1 + (CT \|u_{0,1}\|_{H^s})^{(\sigma - s)/s} + (CT \|u_{0,2}\|_{H^s})^{(\sigma - s)/s}\Big) e^{c\int_{0}^t \|u_1(\tau)\|_{L^2}d\tau} \|u_{0,1}-u_{0,2}\|_{H^s} 
$$
with constants $C$ and $c$ independent from $u_0$ and $T$.
\end{proposition}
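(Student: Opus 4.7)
The plan is to perform an $L^2$ energy estimate on $z := u_1-u_2$. Taking the difference of the BBM equations and applying $(1-\partial_x^2)^{-1}$, $z$ satisfies $\partial_t z + Qz + \tfrac{1}{2} Q((u_1+u_2)z) = 0$ with $Q := (1-\partial_x^2)^{-1}\partial_x$ skew-adjoint on $L^2$, so the linear term disappears from the energy identity. The crucial move is the asymmetric rewriting $u_1+u_2 = 2u_1 - z$, turning the nonlinearity into $Q(u_1 z) - \tfrac{1}{2} Q(z^2)$; this is what will yield the exponential factor $e^{c\int\|u_1\|_{L^2}}$ (rather than a symmetric one in both $u_i$) in the final Gronwall argument.

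The smoothing property $Q : L^2 \to H^{1/2+\epsilon} \hookrightarrow L^\infty$, i.e.\ $\|Qz\|_{L^\infty} \lesssim \|z\|_{L^2}$, combined with the skew-symmetry of $Q$, gives
$$
\Big|\int z\,Q(u_1 z)\,dx\Big| = \Big|\int (Qz)\,u_1 z\,dx\Big| \leq C\|u_1\|_{L^2}\|z\|_{L^2}^2, \qquad \Big|\int z\,Q(z^2)\,dx\Big| \leq C\|z\|_{L^2}^3,
$$
whence $\tfrac{d}{dt}\|z\|_{L^2} \leq C\|u_1\|_{L^2}\|z\|_{L^2} + C\|z\|_{L^2}^2$. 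The cubic self-interaction is linearised via the a priori bound of Proposition~\ref{prop-globonu}: choosing $N \sim (CT\|u_{0,i}\|_{H^s})^{1/s}$ in that proposition yields $\|u_i(\tau)\|_{L^2} \lesssim 1/T + (CT\|u_{0,i}\|_{H^s})^{(\sigma-s)/s}\|u_{0,i}\|_{H^s}$, so that $\|z\|_{L^2} \leq \|u_1\|_{L^2}+\|u_2\|_{L^2}$ converts $\|z\|^2$ into $\|z\|\cdot R(\tau)$ with $R$ of the desired polynomial order. Gronwall together with the trivial bound $\|z(0)\|_{L^2} \leq \|u_{0,1}-u_{0,2}\|_{H^s}$ then delivers the stated inequality, with $e^{c\int\|u_1\|_{L^2}}$ from the $u_1$-term and the multiplicative polynomial prefactor from $T\cdot R$.

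The main technical obstacle is ensuring the contribution of $\|u_2\|_{L^2}$ (and of the cubic term) comes out as a \emph{multiplicative} polynomial factor $1 + (CT\|u_{0,i}\|_{H^s})^{(\sigma-s)/s}$ rather than as an extra exponential --- this is precisely what the asymmetric rewriting and the sharp form of Proposition~\ref{prop-globonu} buy us. A cleaner variant would split each $u_i = v_i + w_i$ into high/low parts as in the proof of Proposition~\ref{prop-globonu}: $\|v_i\|_{L^2}\lesssim 1/T$ would allow Proposition~\ref{prop-lwp} to handle $v_1-v_2$ via its $L^2$-Lipschitz estimate, while $\|w_i\|_{H^\sigma}$ (bounded by the polynomial factor from Proposition~\ref{prop-globonu}) would allow an $H^1$-energy estimate for $w_1-w_2$ in which the polynomial factor emerges explicitly as the ratio $\|w_i(0)\|_{H^1}/\|u_{0,1}-u_{0,2}\|_{H^s} \sim N^{1-s}$.
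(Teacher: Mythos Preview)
Your direct $L^2$ argument has a genuine gap at the step where you claim the cubic term yields a \emph{multiplicative polynomial prefactor}. After the asymmetric rewriting you correctly arrive at
\[
\frac{d}{dt}\|z\|_{L^2}\ \le\ C\|u_1\|_{L^2}\|z\|_{L^2}+C\|z\|_{L^2}^2,
\]
and then linearise $\|z\|_{L^2}^2\le R(\tau)\|z\|_{L^2}$ with $R=\|u_1\|_{L^2}+\|u_2\|_{L^2}$. But Gronwall on $y'\le (C\|u_1\|+CR)y$ gives $\|z(t)\|\le\|z(0)\|\,e^{c\int_0^t(\|u_1\|+R)}$: the $R$–contribution sits \emph{inside} the exponential, not in front of it. Moreover, from Proposition~\ref{prop-globonu} one has $\|u_i(\tau)\|_{L^2}\lesssim T^{-1}+(CT\|u_{0,i}\|_{H^s})^{(\sigma-s)/s}\|u_{0,i}\|_{H^s}$, so $R$ is of order $\|u_{0,i}\|_{H^s}^{\sigma/s}$ (not $(\sigma-s)/s$), and $\int_0^T R\,d\tau$ contributes a factor $e^{c(CT\|u_{0,2}\|_{H^s})^{\sigma/s}}$ --- an exponential in $\|u_{0,2}\|_{H^s}$, not the claimed polynomial $(CT\|u_{0,2}\|_{H^s})^{(\sigma-s)/s}$. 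This distinction is exactly what matters for the stability result (Proposition~\ref{prop-inv}), where the whole point is that only $u_1$ appears in the exponential so that the invariance of $\rho$ can be exploited; an extra factor $e^{c(CT\|u_{0,2}\|_{H^s})^{\sigma/s}}$ would force sub-Gaussian hypotheses on $\mu$.

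The ``cleaner variant'' you mention at the end --- splitting $u_i=v_i+w_i$ via $\Pi_N$ and running an $H^1$ energy estimate on $w=w_1-w_2$ --- is in fact the paper's proof, and it is not merely cleaner but essential. The reason it succeeds where the direct $L^2$ estimate fails is that in the $H^1$ energy identity the cubic term vanishes \emph{exactly}: $\int w\,\partial_x(w^2)=0$. What remains is the linear-in-$w$ term $\int(\partial_x w)\,w\,u_1$ (producing $e^{c\int\|u_1\|_{L^2}}$ after Gronwall) and a genuine forcing term $\int(\partial_x w)(v_1-v_2)w_2$, which is inhomogeneous in $w$ and therefore comes out, via the Duhamel form of Gronwall, as the multiplicative factor $N^{\sigma-s}\sim (CT\|u_{0,i}\|_{H^s})^{(\sigma-s)/s}$ (from $\|w_0\|_{H^\sigma}$ and from $\|v\|_{L^2}\|w_2\|_{H^\sigma}$). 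So the high/low decomposition is not an aesthetic alternative: it is the mechanism that converts what would otherwise be an exponential contribution from $u_{0,2}$ into a polynomial one.
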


\begin{proof} Let 
$$
N = \max (\ent{(CT\|u_{0,1}\|_{H^s})^{1/s} } , \ent{(CT\|u_{0,2}\|_{H^s})^{1/s}})
$$
where $\ent x$ is the smallest integer $n$ such that $n\geq x$. We call $v_{0,i} = (1-\Pi_N ) u_{0,i}$, we have that $\|v_{0,i}\|_{L^2} \leq \frac1{CT}$. Hence, we can apply Proposition \ref{prop-lwp} to $v_{0,i}$, $i=1,2$. There exist two unique solutions of BBM on the times $[-T,T]$, $v_1$ and $v_2$ with respective initial data $v_{0,1}$ and $v_{0,2}$, and besides, $v_1$ and $v_2$ satisfy
$$
\|v_i\|_{L^2} \lesssim T^{-1} \; , \; \|v_1 - v_2\|_{L^2} \lesssim \|v_{0,1}-v_{0,2}\|_{L^2} \leq N^{-s} \|u_{0,1}-u_{0,2}\|_{H^s} \; .
$$
Let $w_i = u_i - v_i$, we have that $w_i$ is the solution of 
$$
\partial_t (1-\partial_x^2) w_i + \partial_x \Big( w_i + v_i w_i + \frac{w_i^2}{2} \Big) = 0
$$
with initial datum $w_{0,i} = \Pi_N u_{0,i}$.

We set $w= w_1-w_2$, $w$ satisfies 
$$
\partial_t (1-\partial_x^2) w + \partial_x \Big( w + v_1w_1 -v_2w_2 + \frac{w_1^2-w_2^2}{2} \Big) =0 
$$
with initial datum $w_0 = w_{0,1} -w_{0,2}$. We can write this equation 
$$
\partial_t (1-\partial_x^2) w + \partial_x \Big( w + wu_1 + (v_1 - v_2) w_2- \frac{w^2}{2} \Big) = 0 \; .
$$
Indeed, we have 
\begin{eqnarray*}
v_1 w_1 - v_2 w_2  &=& v_1 w + v_1 w_2 - v_2 w_2  \\
 & = & v_1 w + (v_1-v_2)w_2 
\end{eqnarray*}
and
\begin{eqnarray*}
w_1^2 - w_2^2 &=& w (w_1+w_2) \\
 &= & w (2 w_1 - w) \\
 &=& 2w_1 w - w^2 
\end{eqnarray*}
and by summing this equalities
\begin{eqnarray*}
 v_1w_1 -v_2w_2 + \frac{w_1^2-w_2^2}{2} &=& v_1 w + w_1 w + (v_1 - v_2) w_2 - \frac{w^2}{2} \\
 &=& u_1 w + (v_1 - v_2) w_2 - \frac{w^2}{2} \; .
\end{eqnarray*}

Let us estimate $w$. We write $v_1-v_2 = v$. We have 
\begin{eqnarray*}
\partial_t \|w\|_{H^1}^2 &=& \partial_t \int w (1-\partial_x^2) w  \\
 &=& 2 \int w \partial_t(1-\partial_x^2) w \\
&=& -2 \int w \partial_x \Big( w + wu_1 + v w_2- \frac{w^2}{2} \Big) 
\end{eqnarray*}
and by keeping only the non null terms, we get
$$
\partial_t \|w\|_{H^1}^2 = 2 \int (\partial_x w) ( wu_1 + v w_2 ) \; .
$$
Thus, we have
$$
2 \|w\|_{H^1} \partial_t \|w\|_{H^1} \leq 2 \|w\|_{H^1} (\|w\|_{L^\infty} \|u_1\|_{L^2} + \|v\|_{L^2} \|w_2\|_{L^\infty})\; .
$$
Using Sobolev embedding $H^\sigma \in L^\infty$, we get
$$
\partial_t \|w\|_{H^1} \leq C (\|w\|_{H^\sigma} \|u_1\|_{L^2} + \|v\|_{L^2} \|w_2\|_{H^\sigma})\; .
$$
By integrating over time, we get
$$
\|w(t)\|_{H^\sigma } \leq \|w(t)\|_{H^1} \leq C \int_{0}^t (\|w(\tau)\|_{H^\sigma} \|u_1(\tau)\|_{L^2} + \|v(\tau)\|_{L^2} \|w_2(\tau)\|_{H^\sigma})d \tau
$$
and by using Gronwall lemma
$$
\|w(t)\|_{H^\sigma} \leq e^{c\int_{0}^t \|u_1(\tau)\|_{L^2} d\tau} \|w_0\|_{H^\sigma} + \int_{0}^t  e^{c\int_{t'}^t \|u_1(\tau)\|_{L^2} d\tau}\|v(t')\|_{L^2} \|w_2(t')\|_{H^\sigma}dt'\; .
$$
We estimate each term. Thanks to Proposition \ref{prop-lwp}, we have 
$$
\|v(t')\|_{L^2} \leq N^{-s} \|u_{0,1}-u_{0,2}\|_{H^s}\; .
$$
Thanks to Proposition \ref{prop-globonu}, we have 
$$
\|w_2(t')\|_{H^\sigma} \leq C N^{\sigma - s}\|u_{0,2}\|_{H^s}\; .
$$
By definition of $N$, $N^s \geq CT \|u_{0,2}\|_{H^s}$, thus  
$$
\|w_2(t')\|_{H^\sigma} \leq C N^\sigma T^{-1} 
$$
hence 
$$
\|v(t')\|_{L^2} \|w_2(t')\|_{H^\sigma} \leq C T^{-1} N^{\sigma - s} \|u_{0,1}-u_{0,2}\|_{H^s} \; .
$$
Finally, we use that 
$$
e^{c\int_{t'}^t \|u_1(\tau)\|_{L^2} d\tau} \leq e^{c\int_{0}^t \|u_1(\tau)\|_{L^2} d\tau}
$$
to get
$$
\int_{0}^t dt' e^{c\int_{t'}^t \|u_1(\tau)\|_{L^2} d\tau}\|v(t')\|_{L^2} \|w_2(t')\|_{H^\sigma} \leq C e^{c\int_{0}^t \|u_1(\tau)\|_{L^2} d\tau}  N^{\sigma - s} \|u_{0,1}-u_{0,2}\|_{H^s}\; .
$$
The initial datum $w_0 = \Pi_N (u_{0,1} - u_{0,2})$ satisfies 
$$
\|w_0\|_{H^\sigma} \leq N^{\sigma -s}\|u_{0,1}-u_{0,2}\|_{H^s} \; .
$$
Therefore, we have the inequality
$$
\|w(t)\|_{H^\sigma} \leq C e^{c\int_{0}^t \|u_1(\tau)\|_{L^2} d\tau}  N^{\sigma - s} \|u_{0,1}-u_{0,2}\|_{H^s} \; .
$$
We estimate $N$. By definition, $N$ is less than
$$
\max (1+ (CT \|u_{0,1}\|_{H^s})^{1/s}, 1+ (CT \|u_{0,1}\|_{H^s})^{1/s}) 
$$
hence 
$$
N^{\sigma - s} \leq C(s,\sigma) \Big( 1 + (CT \|u_{0,1}\|_{H^s})^{(\sigma - s)/s} + (CT \|u_{0,2}\|_{H^s})^{(\sigma - s)/s}\Big) \; ,
$$
which yields
$$
\|w(t)\|_{H^s} \leq C \Big( 1 + (CT \|u_{0,1}\|_{H^s})^{(\sigma - s)/s} + (CT \|u_{0,2}\|_{H^s})^{(\sigma - s)/s}\Big) e^{c\int_{0}^t \|u_1(\tau)\|_{L^2}d\tau} \|u_{0,1}-u_{0,2}\|_{H^s} 
$$
and since $u_1 - u_2 = v + w$ and the $L^2$ norm of $v$ is less than $N^{-s}\|u_{0,1}-u_{0,2}\|_{H^s}$ which is less than the above bound, we have proved the proposition. \end{proof}

\section{Probabilistic integration}

\subsection{Definitions and large deviation estimates}

In this subsection, we define the spaces of probability measures where we prove the continuity and stability, along with distances on these spaces, and we prove the equivalence between large deviation estimates and estimates on the moments of these measures.

\paragraph{Continuity}

\begin{notation} Let $\mathcal M(H^s)$ be the set of probability measures on the topological $\sigma$-algebra of $H^s(\T)$. \end{notation}

The space where we prove the continuity is the one of the measures  with large Gaussian deviation estimates.

\begin{notation} Let $\Sigma$ be the set of probability measures in $\mathcal M(H^s)$ with large Gaussian deviation estimates, that is :
$$
\Sigma  = \Big\{ \rho \in \mathcal M(H^s) \; \Big| \; \exists \delta > 0 \; ; \; \int_{H^s} e^{\delta \|u\|_{H^s}^ 2} d\rho(u) < \infty \Big\} \; .
$$
\end{notation}

We have an equivalence between belonging to $\Sigma$ and satisfying estimates on the moments of order $p$.

\begin{proposition}\label{prop-glde} A measure $\rho \in \mathcal M(H^s)$ belongs to $\Sigma$ if and only if there exists $C(\rho)$ such that for all $p \geq 1$, $\|\; \|u\|_{H^s} \|_{L^p(d\rho(u))} \leq C(\rho) \sqrt p$. \end{proposition}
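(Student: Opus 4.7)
\medskip

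\noindent\textbf{Proof proposal.} The plan is to prove the two implications separately, using the standard dictionary between sub-Gaussian concentration and sub-Gaussian moment growth, applied here to the random variable $\|u\|_{H^s}$ under $\rho$.

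\emph{From moment bounds to large deviation estimates.} Suppose that $\|\,\|u\|_{H^s}\|_{L^p(d\rho)} \leq C(\rho)\sqrt p$ for all $p\geq 1$. I would expand the exponential as a Taylor series and integrate term by term, obtaining
$$
\int_{H^s} e^{\delta \|u\|_{H^s}^2}d\rho(u) = \sum_{k\geq 0} \frac{\delta^k}{k!} \int_{H^s} \|u\|_{H^s}^{2k}d\rho(u) \; .
$$
For $k\geq 1$ the moment bound at $p=2k$ gives $\int \|u\|_{H^s}^{2k}d\rho \leq C(\rho)^{2k}(2k)^k$. Using Stirling in the form $k!\geq (k/e)^k$, each term is bounded by $(2e\delta C(\rho)^2)^k$, so the series converges as soon as $\delta < 1/(2eC(\rho)^2)$. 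This gives $\rho \in \Sigma$.

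\emph{From large deviation estimates to moment bounds.} Conversely, assume there exists $\delta>0$ with $M:=\int e^{\delta\|u\|_{H^s}^2}d\rho(u)<\infty$. The key inequality is the pointwise bound
$$
(\delta\|u\|_{H^s}^2)^{p/2} \leq \Gamma\!\left(\tfrac{p}{2}+1\right) e^{\delta\|u\|_{H^s}^2} \; ,
$$
which follows from comparing the single term $x^{p/2}/\Gamma(p/2+1)$ to the full series of $e^x$ (for integer $p/2$ this is immediate from the exponential series; for general $p$ one can reduce to this by applying the bound at $\lceil p/2\rceil$ and interpolating, or invoke the identity $x^{\alpha} = \Gamma(\alpha+1)\int_0^\infty t^{\alpha-1}e^{-tx}\cdots$; I would use the cleanest route, which is to note that $\Gamma(\alpha+1)=\int_0^\infty t^\alpha e^{-t}dt \geq \int_x^\infty t^\alpha e^{-t}dt \geq x^\alpha e^{-x}$ after a change of variable). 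Integrating and raising to the $1/p$ gives
$$
\|\,\|u\|_{H^s}\|_{L^p(d\rho)} \leq \delta^{-1/2}\,\Gamma\!\left(\tfrac{p}{2}+1\right)^{1/p} M^{1/p} \; ,
$$
and Stirling yields $\Gamma(p/2+1)^{1/p} \leq C\sqrt p$ uniformly in $p\geq 1$, while $M^{1/p}\leq \max(1,M)$. This produces a constant $C(\rho)$ as required.

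\emph{Expected obstacle.} There is no serious obstacle; the only delicate point is the small-$p$ regime in the second direction, where the inequality relating the single monomial to the exponential and Stirling's formula must be applied uniformly on $[1,\infty)$. I would handle this by treating $p\in[1,2]$ trivially via Jensen (bounding $\|\,\|u\|_{H^s}\|_{L^p}\leq\|\,\|u\|_{H^s}\|_{L^2}$) and using the series argument for $p\geq 2$, which absorbs all constants into $C(\rho)$.
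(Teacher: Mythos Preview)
Your proposal is correct. Both implications are valid as written: the series argument with Stirling for the first direction, and the pointwise Gamma inequality (your change-of-variable justification $\Gamma(\alpha+1)\geq\int_x^\infty t^\alpha e^{-t}\,dt=e^{-x}\int_0^\infty(s+x)^\alpha e^{-s}\,ds\geq x^\alpha e^{-x}$ is clean and works for all $\alpha\geq 0$) for the second.

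However, the paper takes a genuinely different route. It passes through tail bounds in both directions. From $\rho\in\Sigma$ it first obtains $\rho(\|u\|_{H^s}\geq\lambda)\leq C e^{-\delta\lambda^2}$ via Markov, and then recovers the moments through the layer-cake formula $\E_\rho(X^p)=\int_0^\infty p\lambda^{p-1}\rho(X\geq\lambda)\,d\lambda$ and an explicit Gaussian integral. In the converse direction it again uses Markov, $\rho(X\geq\lambda)\leq (C\sqrt p/\lambda)^p$, and optimises in $p$ (choosing $p\sim\lambda^2$) to get the Gaussian tail $\rho(X\geq\lambda)\leq e^{-c\lambda^2}$, which immediately gives integrability of $e^{\delta X^2}$. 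Your approach is more direct and bypasses the tail step entirely; the paper's approach has the advantage of making the Gaussian tail decay explicit as an intermediate statement, which is precisely what motivates calling $\Sigma$ the set of measures with ``large Gaussian deviation estimates'' and is used in that form later (e.g.\ in the stability argument).
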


\begin{notation} We write for all $F : H^s \rightarrow \R$ measurable
$$
\|F(u)\|_{L^p_\rho} = \|F(u)\|_{L^p(d\rho(u))} = \Big( \int_{H^s} |F(u)|^p d\rho(u)\Big)^{1/p}\; .
$$
\end{notation}

\begin{proof} This is a well-known property hence we only sketch the proof. For more details, we refer to Proposition 4.4 of \cite{BTranI} .

Assume that $\rho \in \Sigma$. Let $X = \|u\|_{H^s}$. We have, thanks to Markov's inequality
$$
\rho( X \geq \lambda ) \leq e^{-\delta \lambda^2} \E_\rho ( e^{\delta X^2})
$$
where $\E_*$ is the average with regard to the measure $*$. Hence, we get
$$
\E_\rho ( X^p) = \int p \lambda^{p-1} \rho( X \geq \lambda) d\lambda \leq C(\rho) \int p \lambda^{p-1} e^{-\delta \lambda^2}d\lambda \; .
$$
By using the change of variable $\lambda  = \sqrt{1/2 \delta}y$, we get
$$
\E_\rho ( X^p) \leq C \Big( \frac1{\sqrt{2\delta}}\Big)^p \int_{0}^\infty p y^{p-1} e^{-y^2/2} dy \; .
$$
The integral $\int_{0}^\infty p y^{p-1} e^{-y^2/2} dy$ does not depend on $\rho$ and by induction we have that it is less than $C p^{p/2}$, hence 
$$
\E_\rho ( X^p) \leq C \Big( \frac1{\sqrt{2\delta}}\Big)^p p^{p/2}
$$
therefore
$$
\|X\|_{L^p_\rho} = \E_\rho (X^p)^{1/p} \leq C \sqrt p \; .
$$

Conversely, assume that 
$$
\|X\|_{L^p_\rho} \leq C \sqrt p \; .
$$
Then, the probability $\rho(X\geq \lambda)$ can be bounded by
$$
\rho(X\geq \lambda) = \rho(X^p\geq \lambda^p) \leq \lambda^{-p} \E_\rho(X^p) \leq \Big( \frac{C\sqrt p}{\lambda}\Big)^p\; .
$$
By choosing $p$ such that $ \frac{C\sqrt p}{\lambda} = e^{-1}$ that is $p = e^{-2}\lambda^2/C^2$, we get
$$
\rho(X\geq \lambda) \leq e^{- c \lambda^2}
$$
which ensures that $e^{\delta X^2}$ is integrable for all $\delta < c$.
\end{proof}

\paragraph{Stability}

For the stability, the hypothesis on the measures is weaker, we only assume that it has a $p$-moment in $H^s$.

\begin{notation} We call $\Sigma_p$ the measures on $H^s$ with a $p$-moment ($p\geq 1$), that is : 
$$
\Sigma_p = \Big\{ \rho \in \mathcal M(H^s) \; \Big| \; \E_{\rho} (\|u\|_{H^s}^p) < \infty \Big\} \; .
$$
\end{notation}

To compare measures, we use the Wasserstein metrics.

\begin{definition} Let $s'\leq s$ and $p'\leq p$, let $\mu$, $\nu$ two measures in $\Sigma_p$. The Wasserstein metrics $d_{s',p}$ is defined as 
$$
d_{s',p'}(\mu,\nu) = \inf_{\gamma \in \textrm{Marg} (\mu,\nu)} \Big( \int \|u_1 - u_2\|_{H^{s'}}^{p'} d\gamma( u_1,u_2) \Big)^{1/p'}
$$
where $\textrm{Marg} (\mu,\nu)$ is the set of probability measures on $H^s \times H^s$ whose marginals are $\mu$ and $\nu$, that is, for all $A$ measurable in $H^s$, $\gamma(A\times H^s) = \mu(A)$ and $\gamma (H^s\times A) = \nu (A)$.
\end{definition}

We will compare the measures transported by the BBM flow in $d_{0,p'}$ using the $d_{s,p}$ distance for the initial data.

\paragraph{Another large deviation estimate}

\begin{proposition}\label{prop-alde}Let $X$ be a random variable on a probability space with measure $\rho$ and let $\alpha > 0$. The fact that there exists $\delta > 0$ such that $e^{\delta (\ln X)^{1/\alpha +1}}1_{X\geq 1}$ is $\rho$-integrable is equivalent to the fact that there exists $E_0$ and $C> 1$ such that for all $p$
$$
\E_\rho (X^p) \leq E_0 C^{p^{1+\alpha}} \; .
$$
\end{proposition}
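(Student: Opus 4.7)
The statement is the exact analogue of Proposition \ref{prop-glde}, with the Gaussian weight $e^{-\delta\lambda^2}$ replaced by the slower weight $e^{-\delta(\ln\lambda)^{1+1/\alpha}}$ and, correspondingly, the bound $\sqrt{p}$ on the $L^p_\rho$-norm replaced by $C^{p^\alpha}$. I would therefore follow the same two-step strategy: combine Markov's inequality with the layer-cake representation $\E_\rho(X^p) = \int_0^\infty p\lambda^{p-1}\rho(X\geq\lambda)\,d\lambda$ to go back and forth between tails and moments, with Laplace's method playing the role of the Gaussian change of variable used in Proposition \ref{prop-glde}.

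\emph{Large deviations $\Rightarrow$ moments.} Set $I := \E_\rho(e^{\delta(\ln X)^{1+1/\alpha}}\,1_{X\geq 1})$, which is finite by assumption. Markov's inequality gives $\rho(X\geq\lambda)\leq I\,e^{-\delta(\ln\lambda)^{1+1/\alpha}}$ for $\lambda\geq 1$, while $\rho(X\geq\lambda)\leq 1$ for $\lambda<1$. Plugging this into the layer-cake formula and changing variables $y=\ln\lambda$ yields
$$\E_\rho(X^p)\leq 1 \;+\; I\cdot p\int_0^\infty e^{py-\delta y^{1+1/\alpha}}\,dy.$$
The exponent $\varphi(y)=py-\delta y^{1+1/\alpha}$ has its unique maximum at $y^* = (\alpha p/(\delta(1+\alpha)))^\alpha$, with $\varphi(y^*)=\kappa_\alpha\,\delta^{-\alpha}\,p^{1+\alpha}$ for the explicit constant $\kappa_\alpha=\alpha^\alpha/(1+\alpha)^{1+\alpha}$. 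A Laplace estimate around $y^*$ (or a cruder bound via completion to a Gaussian in $y-y^*$) shows that the integral is bounded by $E_0'\,C_1^{p^{1+\alpha}}$, and the polynomial prefactor $p$ is absorbed into a slight increase of the base constant, giving the claimed $\E_\rho(X^p)\leq E_0 C^{p^{1+\alpha}}$.

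\emph{Moments $\Rightarrow$ large deviations.} Conversely, assume $\E_\rho(X^p)\leq E_0 C^{p^{1+\alpha}}$ for all $p\geq 1$. Markov gives $\rho(X\geq\lambda)\leq E_0\exp\!\big(p^{1+\alpha}\ln C-p\ln\lambda\big)$ for any $p\geq 1$. Optimising in $p$ at $p^* = (\ln\lambda/((1+\alpha)\ln C))^{1/\alpha}$ (which lies in $[1,\infty)$ for $\lambda$ large enough; for smaller $\lambda$ one simply uses the trivial bound $\rho(X\geq\lambda)\leq 1$) produces
$$\rho(X\geq\lambda)\leq E_0\,e^{-c(\ln\lambda)^{1+1/\alpha}},\qquad c=\frac{\alpha}{(1+\alpha)^{1+1/\alpha}(\ln C)^{1/\alpha}}.$$
Applying the layer-cake formula once more to $e^{\delta(\ln X)^{1+1/\alpha}}1_{X\geq 1}$, whose derivative in $x\geq 1$ is $\delta(1+1/\alpha)(\ln x)^{1/\alpha}x^{-1}e^{\delta(\ln x)^{1+1/\alpha}}$, gives an integrand on $[1,\infty)$ of the form $(\ln x)^{1/\alpha}x^{-1}e^{(\delta-c)(\ln x)^{1+1/\alpha}}$, which is integrable as soon as $\delta<c$.

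\textbf{Main obstacle.} The structural part of the argument is straightforward and parallels Proposition \ref{prop-glde}. The only genuinely technical point is the Laplace estimate in Step 1, together with the bookkeeping needed to absorb the polynomial prefactors ($p$ in Step 1, $(\ln x)^{1/\alpha}$ in Step 2) into the exponential rates by slightly inflating $C$ and slightly decreasing $\delta$. Once one checks that the optimiser $y^*$ (resp.\ $p^*$) lies strictly inside the integration domain for the parameter range of interest, nothing further is needed.
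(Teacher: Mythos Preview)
Your proposal is correct and follows essentially the same strategy as the paper: Markov's inequality plus the layer-cake formula in each direction, with optimisation over the free exponent to extract the rates $(\ln\lambda)^{1+1/\alpha}$ and $p^{1+\alpha}$; the optimisers $p^*$, $y^*$ and the resulting constants you compute agree with the paper's. The only difference is in the tails-to-moments direction: for the integral $\int_0^\infty e^{py-\delta y^{1+1/\alpha}}\,dy$ you invoke Laplace's method around $y^*$, whereas the paper performs an elementary splitting at $\ln\lambda_0=(2(p-1)/\delta)^\alpha$ (a constant multiple of your $y^*$), bounding the low part crudely by $p\lambda_0^p\leq C^{p^{1+\alpha}}$ and the high part, after a second $p$-independent cut, by $Cp$---more self-contained (no need to track the $p$-dependence of $\varphi''(y^*)$), but amounting to the same thing.
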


\begin{proof} We assume that 
$$
\E_\rho (X^p) \leq E_0 C^{p^{1+\alpha}}\; .
$$
For $\lambda \geq 1$, thanks to Markov's inequality, we have 
$$
\rho( X \geq \lambda )  = \rho(X^p \geq \lambda^p) \leq E_0 C^{p^{1+\alpha}} \lambda^{-p} \; .
$$
We minimize 
$$
f(p) = C^{p^{1+\alpha}} \lambda^{-p}
$$
by taking its logarithm,
$$
\ln f(p) = p^{1+ \alpha} \ln C -p \ln \lambda
$$
and differentiating it 
$$
(\ln f)'(p) = (1+ \alpha) p^\alpha \ln C - \ln \lambda\; .
$$
We get that $f$ is minimal when 
$$
p= p_0 := \Big( \frac{\ln \lambda }{\ln C (1+ \alpha)}\Big)^{1/ \alpha} \; ,
$$
that is 
$$
\min f = e^{- \frac{(\ln \lambda)^{1/\alpha+1}}{(\ln C)^{1/\alpha}}\beta(\alpha)}
$$
with $\beta(\alpha) = (1+\alpha)^{-1/\alpha } (1- \frac1{1+ \alpha}) > 0$. Therefore,
$$
\rho(X\geq \lambda) \leq E_0 e^{- \frac{(\ln \lambda)^{1/\alpha+1}}{(\ln C)^{1/\alpha}}\beta(\alpha)}\; .
$$
Since 
$$
\E_\rho (e^{\delta (\ln X)^{1+1/\alpha}}1_{X\geq 1}) = \int_{1}^\infty \delta \Big( 1+ \frac1{\alpha} \Big)\frac{(\ln \lambda)^{1/\alpha}}{\lambda}e^{\delta (\ln \lambda)^{1+1/\alpha}}\rho(X\geq \lambda) d\lambda \; ,
$$
with the change of variable $x= \ln \lambda$, we have 
$$
\E_\rho (e^{\delta (\ln X)^{1+1/\alpha}}1_{X\geq 1}) \leq \int_{0}^\infty \delta (1+ \frac1\alpha ) x^{1/\alpha} e^{\Big(\delta - \frac{\beta(\alpha)}{(\ln C)^{1/\alpha}}\Big)x^{1+1/\alpha}} dx
$$
which ensures that it is finite as long as $\delta$ is strictly less than $\frac{\beta(\alpha)}{(\ln C)^{1/\alpha}}$.

Conversely, we assume that 
$$
\E_\rho (e^{\delta (\ln X)^{1+1/\alpha}}1_{X\geq 1}) = E_1 < \infty \; .
$$
Then, the probability 
$$
\rho( X\geq \lambda)
$$
is less than $1$ if $\lambda \leq 1$ and is less than 
$$
E_1 e^{-\delta (\ln \lambda)^{1+1/\alpha}}
$$
otherwise. Hence, for $p\geq 1$, we get that
$$
\E_\rho (X^p) \leq 1 + E_1 \int_{1}^{\infty} p \lambda^{p-1} e^{-\delta (\ln \lambda)^{1/\alpha+1}}d\lambda\; .
$$
Let 
$$
I = \int_{1}^{\infty} p \lambda^{p-1} e^{-\delta (\ln \lambda)^{1/\alpha+1}}d\lambda\; ,
$$
and by writing $\lambda^{p-1} = e^{(p-1)\ln \lambda}$,  
$$
I = \int_{1}^{\infty} p  e^{-\delta (\ln \lambda)^{1/\alpha+1}+(p-1)\ln \lambda}d\lambda\; .
$$
We have that
$$
(p-1) \ln \lambda \leq \frac\delta2 (\ln \lambda)^{1/\alpha +1}
$$
if and only if 
$$
\ln \lambda \geq \ln \lambda_0 = \Big( \frac{2(p-1)}{\delta} \Big)^{\alpha}\; .
$$
Hence, we have, by dividing the integration between $[1,\lambda_0]$ and $[\lambda_0,\infty[$
$$
I \leq I.1 +I.2 = p\lambda_0^p + \int_{\lambda_0}^\infty pe^{-\delta (\ln \lambda)^{1/\alpha +1}/2} d\lambda\; .
$$
The quantity $I.2$ is less than
$$
 \int_{1}^\infty pe^{-\delta (\ln \lambda)^{1/\alpha +1}/2} d\lambda\; .
$$
We have that
$$
\delta (\ln \lambda)^{1/\alpha +1}/2 \geq 2 \ln \lambda
$$
if and only if 
$$
\ln \lambda \geq \ln \lambda_1 = \Big( \frac4\delta \Big)^{\alpha}\; .
$$
Therefore, we get
$$
I.2 \leq p \lambda_1 + \int_{\lambda_1}^\infty p e^{-2\ln \lambda} d\lambda  \leq p\lambda_1 + p \int_{1}^\infty \frac{d\lambda}{ \lambda^2}\; .
$$
As $\lambda_1$ does not depend on $p$, we get that 
$$
I.2 \leq C p
$$
where $C$ depends on $X$ and $\alpha$. For $I.1$, we use that
$$
\lambda_0 = e^{ \Big( \frac{2(p-1)}{\delta} \Big)^{\alpha}}\leq C^{p^\alpha}
$$
hence 
$$
I.1 = p \lambda_0^p \leq C^{p^{1+\alpha}}
$$
and by summing $I.1$ and $I.2$,
$$
I \leq C^{p^{1+\alpha}}\; .
$$
Finally, as 
$$
\E_\rho (X^p) \leq 1 + E_1 I \; ,
$$
we get 
$$
\E_\rho (X^p) \leq E_0 C^{p^{1+\alpha}}
$$
which concludes the proof. \end{proof}

\subsection{Continuity of the flow}

In this subsection, we prove the continuity of the action of the flow of BBM.

\begin{definition} Let $\mu \in \mathcal M(H^s)$. For all $t\in \R$ we call $\mu^t$ the image measure of $\mu$ under the flow of BBM $\psi (t)$, that is, for all measurable set $A$,
$$
\mu^t(A) = \mu (\psi(t)^{-1}(A)) \; .
$$
\end{definition}

\begin{remark} This definition is possible because $\psi(t)$ is continuous on $H^s$ and hence measurable on its topological $\sigma$-algebra.\end{remark}

\begin{proposition}\label{prop-cont} Let $s\in]1/4,1[$, $p \geq 1$ and $p_1,p_2$ such that $\frac1{p_1} + \frac1{p_2}  = \frac1{p} $ and $t\in \R$. Let $T = 1+|t|$. Let $\sigma \in ]1/2,1[$ such that $1<\sigma/s  < 2$. For all $\mu,\nu \in  \Sigma$, we have 
$$
d_{p,0} (\mu^t, \nu^t) \leq C(\mu, \nu, t, p_1,\sigma) d_{p_2,s} (\mu, \nu) 
$$
with 
\begin{multline*} 
C(\mu,\nu,t,p_1,\sigma) = C \| \Big( 1 + (CT \|u_{0,1}\|_{H^s})^{(\sigma - s)/s} \Big) e^{T\Big( 1 + (CT \|u_{0,1}\|_{H^s})^{\sigma /s}\Big)}\|_{L^{2p_1}_\mu} \times \\
 \| \Big( 1 + (CT \|u_{0,2}\|_{H^s})^{(\sigma - s)/s} \Big) e^{T\Big( 1 + (CT \|u_{0,2}\|_{H^s})^{\sigma /s}\Big)}\|_{L^{2p_1}_\nu} \; .
\end{multline*}
\end{proposition}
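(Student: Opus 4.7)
The plan is to deduce the Wasserstein estimate from the deterministic inequality of Proposition \ref{prop-diff}, combined with the control on solutions from Proposition \ref{prop-globonu}, by integrating against an optimal (or near-optimal) coupling and applying H\"older-type inequalities. The overall structure is: first reduce everything to an estimate of the form $\|\psi(t)u_{0,1}-\psi(t)u_{0,2}\|_{L^2}\leq A_1(u_{0,1})A_2(u_{0,2})\|u_{0,1}-u_{0,2}\|_{H^s}$ where the coefficients factor between the two initial data; then integrate against a coupling $\gamma\in\textrm{Marg}(\mu,\nu)$; finally take the infimum over $\gamma$.

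First I would start with Proposition \ref{prop-diff} applied to $u_{0,1}$ and $u_{0,2}$, and bound the factor $e^{c\int_0^t\|u_1(\tau)\|_{L^2}d\tau}$ using Proposition \ref{prop-globonu}. With the choice $N\sim\lceil(CT\|u_{0,1}\|_{H^s})^{1/s}\rceil$, one gets $\sup_{\tau\in[-T,T]}\|u_1(\tau)\|_{L^2}\lesssim 1/T+N^{\sigma-s}\|u_{0,1}\|_{H^s}$, so that $\int_0^t\|u_1(\tau)\|_{L^2}d\tau\lesssim 1+T N^{\sigma-s}\|u_{0,1}\|_{H^s}\lesssim 1+(CT\|u_{0,1}\|_{H^s})^{\sigma/s}$. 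Using $1+a+b\leq(1+a)(1+b)$ to split the polynomial prefactor of Proposition \ref{prop-diff} and symmetrizing the exponential (by multiplying by the identical factor in $u_{0,2}$, which is $\geq 1$), one arrives at
$$\|\psi(t)u_{0,1}-\psi(t)u_{0,2}\|_{L^2}\leq A_1(u_{0,1})\,A_2(u_{0,2})\,\|u_{0,1}-u_{0,2}\|_{H^s},$$
where
$$A_i(u)=C\bigl(1+(CT\|u\|_{H^s})^{(\sigma-s)/s}\bigr)e^{T(1+(CT\|u\|_{H^s})^{\sigma/s})}.$$

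Next, for any $\gamma\in\textrm{Marg}(\mu,\nu)$, the pushforward $(\psi(t),\psi(t))_*\gamma$ lies in $\textrm{Marg}(\mu^t,\nu^t)$, so
$$d_{0,p}(\mu^t,\nu^t)^p\leq\int A_1(u_{0,1})^p A_2(u_{0,2})^p\|u_{0,1}-u_{0,2}\|_{H^s}^p\,d\gamma.$$
Apply H\"older with exponents $p_1/p$ and $p_2/p$ (which is legitimate since $1/p_1+1/p_2=1/p$) to split the integral into an $A_1A_2$-piece in $L^{p_1}(\gamma)$ and the $\|u_{0,1}-u_{0,2}\|_{H^s}$-piece in $L^{p_2}(\gamma)$. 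On the first piece, Cauchy-Schwarz in $\gamma$ separates the dependencies on $u_{0,1}$ and $u_{0,2}$, and using that the marginals of $\gamma$ are $\mu$ and $\nu$ yields $(\int A_1^{p_1}A_2^{p_1}d\gamma)^{1/p_1}\leq\|A_1\|_{L^{2p_1}_\mu}\|A_2\|_{L^{2p_1}_\nu}$. Taking the infimum over $\gamma\in\textrm{Marg}(\mu,\nu)$ on the second piece produces $d_{s,p_2}(\mu,\nu)$, which is exactly the claimed bound.

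The main (mild) obstacle is verifying that the norms $\|A_i\|_{L^{2p_1}_{\mu/\nu}}$ appearing in $C(\mu,\nu,t,p_1,\sigma)$ are actually finite. This is where the hypothesis $\sigma/s<2$ together with $\mu,\nu\in\Sigma$ is crucial: $A_i(u)^{2p_1}$ grows like $\exp(C_{t,p_1}\|u\|_{H^s}^{\sigma/s})$, and since $\sigma/s<2$, Young's inequality gives $\|u\|_{H^s}^{\sigma/s}\leq\varepsilon\|u\|_{H^s}^2+C_\varepsilon$ for any $\varepsilon>0$; choosing $\varepsilon<\delta$ for a $\delta>0$ witnessing $\mu\in\Sigma$ (respectively $\nu\in\Sigma$) guarantees integrability. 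Apart from this integrability check, the argument is a clean combination of the deterministic estimates already established in Section 2 with standard Wasserstein manipulations.
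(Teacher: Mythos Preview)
Your proposal is correct and follows essentially the same route as the paper: push forward a coupling $\gamma$ by $(\psi(t),\psi(t))$ to land in $\textrm{Marg}(\mu^t,\nu^t)$, insert the deterministic bound of Proposition~\ref{prop-diff}, control the exponential via Proposition~\ref{prop-globonu}, split with H\"older ($1/p=1/p_1+1/p_2$) and then Cauchy--Schwarz ($1/p_1=1/(2p_1)+1/(2p_1)$), reduce to the marginals, and take the infimum over $\gamma$. The only cosmetic difference is that you bound $\int_0^t\|u_1(\tau)\|_{L^2}d\tau$ using the minimal $N$ for $u_{0,1}$ alone and then artificially symmetrize by multiplying with the $u_{0,2}$ exponential factor (which is $\geq 1$), whereas the paper keeps the $N$ from Proposition~\ref{prop-diff} (depending on both initial data) and splits $e^{T(1+a+b)}\leq e^{T(1+a)}e^{T(1+b)}$; both lead to the same $A_1(u_{0,1})A_2(u_{0,2})$ factorization. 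Your explicit use of Young's inequality for the finiteness check is exactly the content of the paper's remark that $\sigma/s<2$ together with $\mu,\nu\in\Sigma$ makes the $L^{2p_1}$ norms finite.
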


\begin{proof} Let $\gamma \in \textrm{Marg} (\mu, \nu)$, that is, $\gamma$ is a measure on $H^s\times H^s$ whose marginals are $\mu$ and $\nu$. Set $\gamma^t$ the image measure of $\gamma$ under the map $(\psi(t), \psi(t))$. For all $A$ measurable in $H^s$, we have 
$$
\gamma^t (A \times H^s) = \gamma (\psi(t)^{-1} A\times \psi(t)^{-1} H^s) = \gamma (\psi(t)^{-1} (A) \times H^s) \; .
$$
Since the marginals of $\gamma$ are $\mu$ and $\nu$, we get
$$
\gamma^t (A \times H^s) =\mu (\psi(t)^{-1} (A)) = \mu^t (A)\; .
$$
For the same reasons,
$$
\gamma^t(H^s \times A) = \nu^t (A) \; .
$$
In other words, $\gamma^t \in \textrm{Marg} (\mu^t,\nu^t)$. Therefore, we get
$$
d_{p, 0} (\mu^t, \nu^t) \leq \left( \int \|u_{1} - u_{2}\|_{L^2}^p d\gamma^t(u_{1}, u_{2}) \right)^{1/p} \; .
$$
We do the change of variable $(u_1,u_2) = (\psi(t) u_{0,1},\psi(t)u_{0,2}) = (\psi(t),\psi(t))(u_{0,1},u_{0,2})$, we get, thanks to the definition of $\gamma^t$,
$$
d_{p, 0} (\mu^t, \nu^t) \leq \left( \int \|\psi(t)u_{0,1} - \psi(t)u_{0,2}\|_{L^2}^p d\gamma(u_{0,1}, u_{0,2}) \right)^{1/p}\; .
$$
We set $u_i(t) = \psi(t) u_{0,i}$. We input the estimate of Proposition \ref{prop-diff} with $T =1+ |t|$ ($T$ has to be bigger than $1$),
$$
\|u_1(t) - u_2(t)\|_{L^2} \leq C \Big( 1 + (CT \|u_{0,1}\|_{H^s})^{(\sigma - s)/s} + (CT \|u_{0,2}\|_{H^s})^{(\sigma - s)/s}\Big) e^{c\int_{0}^t \|u_1(\tau)\|_{L^2}d\tau} \|u_{0,1}-u_{0,2}\|_{H^s} \; ,
$$
we get
$$
d_{p, 0} (\mu^t, \nu^t) \leq  C \big\|\Big( 1 + (CT \|u_{0,1}\|_{H^s})^{(\sigma - s)/s} + (CT \|u_{0,2}\|_{H^s})^{(\sigma - s)/s}\Big) e^{c\int_{0}^t \|u_1(\tau)\|_{L^2}d\tau} \|u_{0,1}-u_{0,2}\|_{H^s}\big\|_{L^p_\gamma} \; .
$$
We use that $1/p = 1/p_1 + 1/p_2$ to write the H\"older inequality,
$$
d_{p, 0} (\mu^t, \nu^t) \leq  C \big\|\Big( 1 + (CT \|u_{0,1}\|_{H^s})^{(\sigma - s)/s} + (CT \|u_{0,2}\|_{H^s})^{(\sigma - s)/s}\Big) e^{c\int_{0}^t \|u_1(\tau)\|_{L^2}d\tau}\big\|_{L^{p_1}_\gamma} \big\| \|u_{0,1}-u_{0,2}\|_{H^s}\big\|_{L^{p_2}_\gamma} \; .
$$
Let
$$
I  = \|\Big( 1 + (CT \|u_{0,1}\|_{H^s})^{(\sigma - s)/s} + (CT \|u_{0,2}\|_{H^s})^{(\sigma - s)/s}\Big) e^{c\int_{0}^t \|u_1(\tau)\|_{L^2}d\tau}\|_{L^{p_1}_\gamma}\; .
$$
We use the estimates on $u_1$ in Proposition \ref{prop-globonu} 
$$
\|u_1(t)\|_{L^2} \leq C \Big( \frac1{T} + N^{\sigma - s }\|u_{0,1}\|_{H^s}\Big) 
$$
and that $N$ is build such that 
$$
N^{\sigma-s} \lesssim 1 + (CT \|u_{0,1}\|_{H^s})^{(\sigma - s)/s} + (CT \|u_{0,2}\|_{H^s})^{(\sigma - s)/s}
$$
to get that
$$
\int_{0}^t \|u_1(\tau)\|_{L^2}d\tau \lesssim T\Big( 1 + (CT \|u_{0,1}\|_{H^s})^{\sigma /s} + (CT \|u_{0,2}\|_{H^s})^{\sigma /s}\Big)\; .
$$
We then use that
$$
\Big( 1 + (CT \|u_{0,1}\|_{H^s})^{(\sigma - s)/s} + (CT \|u_{0,2}\|_{H^s})^{(\sigma - s)/s}\Big)  \leq \Big( 1 + (CT \|u_{0,1}\|_{H^s})^{(\sigma - s)/s} \Big) \Big( 1 + (CT \|u_{0,2}\|_{H^s})^{(\sigma - s)/s}\Big) 
$$
and a second H\"older inequality with $1/p_1 = 1/(2 p_1) +1/(2p_1)$ to get
$$
I \leq I.1 \times I.2 
$$
with
$$
I.1 = \| \Big( 1 + (CT \|u_{0,1}\|_{H^s})^{(\sigma - s)/s} \Big) e^{T\Big( 1 + (CT \|u_{0,1}\|_{H^s})^{\sigma /s}\Big)}\|_{L^{2p_1}_\gamma}
$$
and
$$
I.2 = \| \Big( 1 + (CT \|u_{0,2}\|_{H^s})^{(\sigma - s)/s} \Big) e^{T\Big( 1 + (CT \|u_{0,2}\|_{H^s})^{\sigma /s}\Big)}\|_{L^{2p_1}_\gamma}\; .
$$
As $I.1$ does not depend on $u_{0,2}$ and as $\gamma$ has $\mu$ as a left marginal, we get 
$$
I.1 = \| \Big( 1 + (CT \|u_{0,1}\|_{H^s})^{(\sigma - s)/s} \Big) e^{T\Big( 1 + (CT \|u_{0,1}\|_{H^s})^{\sigma /s}\Big)}\|_{L^{2p_1}_\mu}
$$
which is well defined for all time and all $p_1$ since $\sigma$ has been chosen in $]1/2,2s[$ and thus $\sigma/s < 2$, which is possible since $s>1/4$, and since $\mu$ belongs to $\Sigma$, that is $\mu$ has large Gaussian deviation estimates in $H^s$. Similarly
$$
I.2 = \| \Big( 1 + (CT \|u_{0,2}\|_{H^s})^{(\sigma - s)/s} \Big) e^{T\Big( 1 + (CT \|u_{0,2}\|_{H^s})^{\sigma /s}\Big)}\|_{L^{2p_1}_\nu}
$$
and is well-defined for the same reasons. Note that the bound on $I$ does not depend on $\gamma \in \textrm{Marg} (\mu, \nu)$. We now have the estimate
$$
d_{p, 0} (\mu^t, \nu^t) \leq  C I.1 I.2 \| \|u_{0,1}-u_{0,2}\|_{H^s}\|_{L^{p_2}_\gamma} 
$$
and we conclude by taking the infimum over $\gamma \in \textrm{Marg} (\mu, \nu)$. \end{proof}

\subsection{Stability of invariant measures}

\begin{definition} The measure that is known as $\rho$ in the rest of the paper is an invariant measure on $H^s$ under the flow of BBM, that is, for all measurable set $A$ in $H^s$ and all time $t$, we have 
$$
\rho (\psi(t)^{-1} A) = \rho(A)
$$
or equivalently, for all measurable bounded function $F : H^s \rightarrow \R$, we have 
$$
\E_{\rho} (F) = \E_\rho (F \circ \psi(t) ) \; .
$$
Besides, we assume that $\rho$ admits Gaussian large deviation estimates, that is $\rho \in \Sigma$.
\end{definition}

\begin{remark} At least one measure of this kind exists, as was proved in \cite{dSwav} . For this measure, $s<1/2$.\end{remark}

\begin{proposition}\label{prop-inv} Let $s\in ]1/3,1[$. Let $\mu \in \Sigma_{p_1}$ and $\sigma \in ]\max(1/2,s),\min(3s/2,1)[$. For all $p < p_1$, we have 
$$
d_{p,0} (\mu^t , \rho) \leq C(p_1,t, \mu, \sigma) d_{p_2,s}(\mu,\rho) 
$$
with
\begin{equation}\label{estistab}
C(p_1,t,\mu,\sigma) =  C \Big( 1 + C T^{(\sigma - s)/s} ( \sqrt{\frac{p_1 (\sigma -s )}{s}} + \|u_{0,2}\|_{L^{2p_1(\sigma -s)/s}}^{(\sigma -s)/s})\Big)  e^{c T^2 p_1}
\end{equation}
where $T = 1+|t|$.
\end{proposition}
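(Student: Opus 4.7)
The approach parallels that of Proposition~\ref{prop-cont}, but exploits the invariance of $\rho$ to obtain a much sharper bound on the exponential factor of Proposition~\ref{prop-diff}.

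Since $\rho^t = \rho$, for any $\gamma \in \textrm{Marg}(\rho, \mu)$ the push-forward $\gamma^t = (\psi(t), \psi(t))_* \gamma$ belongs to $\textrm{Marg}(\rho, \mu^t)$. Taking $u_{0,1} \sim \rho$ and $u_{0,2} \sim \mu$ in this coupling, and writing $u_i(t) = \psi(t) u_{0,i}$, one gets
$$
d_{0,p}(\mu^t, \rho) \leq \big\| \|u_1(t) - u_2(t)\|_{L^2} \big\|_{L^p_\gamma}.
$$
I would then insert the deterministic bound of Proposition~\ref{prop-diff}; a first H\"older inequality with $1/p = 1/p_1 + 1/p_2$ peels off $\|u_{0,1}-u_{0,2}\|_{H^s}$, whose $L^{p_2}_\gamma$ infimum over $\gamma$ produces $d_{s,p_2}(\mu,\rho)$. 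A second H\"older with $1/p_1 = 1/(2p_1) + 1/(2p_1)$ then separates the polynomial factor $1 + A_1 + A_2$, where $A_i = (CT\|u_{0,i}\|_{H^s})^{(\sigma-s)/s}$, from the exponential factor $E = \exp\bigl(c\int_0^t \|u_1(\tau)\|_{L^2}\, d\tau\bigr)$.

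The main obstacle, and the place where invariance does the essential work, is controlling $\|E\|_{L^{2p_1}_\gamma}$: the path integral inside $E$ depends on the entire trajectory $\{u_1(\tau)\}_{\tau \in [0,t]}$, so the deterministic $H^s$-control used in Proposition~\ref{prop-cont} would force a polynomial dependence on $\|u_{0,1}\|_{H^s}$ \emph{inside} the exponential (producing the much larger factor $e^{T(1+(CT\|u_{0,1}\|_{H^s})^{\sigma/s})}$). The key trick is to apply Jensen's inequality with the uniform probability measure $d\tau/t$ on $[0,t]$:
$$
E \;\leq\; \frac{1}{t}\int_0^t e^{ct \|u_1(\tau)\|_{L^2}}\, d\tau.
$$
Taking the $L^{2p_1}_\gamma$ norm, using Minkowski, and invoking invariance (for each fixed $\tau$, $u_1(\tau)$ has law $\rho$) reduces the computation to the marginal quantity $\|e^{ct\|u\|_{L^2}}\|_{L^{2p_1}_\rho}$. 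Since $\|u\|_{L^2} \leq \|u\|_{H^s}$ and $\rho \in \Sigma$, Proposition~\ref{prop-glde} gives $\|u\|_{L^q_\rho} \leq C\sqrt{q}$, from which one deduces $\E_\rho[e^{\lambda\|u\|_{H^s}}] \leq C' e^{c\lambda^2}$ (via $\lambda X \leq \alpha X^2 + \lambda^2/(4\alpha)$ and the integrability of $e^{\alpha\|u\|_{H^s}^2}$). Applied with $\lambda = 2p_1 ct$, this yields the $e^{cT^2 p_1}$ factor after taking the $1/(2p_1)$-th root.

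For the polynomial factor, the triangle inequality in $L^{2p_1}_\gamma$ splits the sum according to the two marginals. The $\rho$-contribution is again controlled by the sub-Gaussian bound $\|u\|_{L^q_\rho} \leq C\sqrt{q}$ with $q = 2p_1(\sigma-s)/s$, yielding a $\sqrt{p_1(\sigma-s)/s}$-type term. The $\mu$-contribution produces $\|u_{0,2}\|_{L^{2p_1(\sigma-s)/s}_\mu}^{(\sigma-s)/s}$, which is finite precisely when $2p_1(\sigma-s)/s \leq p_1$, i.e.\ $\sigma \leq 3s/2$; together with the Sobolev constraint $\sigma > \max(\tfrac12,s)$ required by Proposition~\ref{prop-diff}, this forces $s > 1/3$. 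Assembling the three pieces and taking the infimum over $\gamma \in \textrm{Marg}(\rho,\mu)$ gives the announced bound.
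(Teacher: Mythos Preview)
Your proposal is correct and follows essentially the same route as the paper: the same two H\"older splits, the same triangle-inequality treatment of the polynomial factor (with the $\sigma\leq 3s/2$ constraint explaining $s>1/3$), and the same Jensen--Minkowski--invariance argument to reduce the exponential factor to a single-time sub-Gaussian moment of $\rho$, yielding $e^{cT^2p_1}$. The only cosmetic difference is that the paper averages over $[0,t]$ with weight $1/T$ rather than $1/t$, and appeals directly to the Gaussian moment-generating identity rather than spelling out the $\lambda X\le \alpha X^2+\lambda^2/(4\alpha)$ trick.
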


\begin{proof} The proof begins in the same way as the one of Proposition \ref{prop-cont}. We start from
$$
d_{p, 0} (\rho^t, \mu^t) \leq  C \big\|\Big( 1 + (CT \|u_{0,1}\|_{H^s})^{(\sigma - s)/s} + (CT \|u_{0,2}\|_{H^s})^{(\sigma - s)/s}\Big) e^{c\int_{0}^t \|u_1(\tau)\|_{L^2}d\tau}\big\|_{L^{p_1}_\gamma} \big\|\; \|u_{0,1}-u_{0,2}\|_{H^s}\big\|_{L^{p_2}_\gamma} 
$$
with $\frac1p = \frac1{p_1}+\frac1{p_2}$, and $\sigma > 1/2$ and $\gamma$ has for marginals $\rho$ and $\mu$.

Since $\rho$ is invariant under the flow of BBM, we have $\rho^t = \rho$, hence 
$$
d_{p, 0}(\rho, \mu^t) = d_{p,0} (\rho^t,\mu^t) \; .
$$

We estimate
$$
I = \big\|\Big( 1 + (CT \|u_{0,1}\|_{H^s})^{(\sigma - s)/s} + (CT \|u_{0,2}\|_{H^s})^{(\sigma - s)/s}\Big) e^{c\int_{0}^t \|u_1(\tau)\|_{L^2}d\tau}\big\|_{L^{p_1}_\gamma} \; .
$$
We use a H\"older inequality with $1/p_1 = 1/(2p_1) +1/(2p_1)$:
$$
I \leq I.1 I.2
$$
with 
$$
I.1 = \|\Big( 1 + (CT \|u_{0,1}\|_{H^s})^{(\sigma - s)/s} + (CT \|u_{0,2}\|_{H^s})^{(\sigma - s)/s}\Big)\|_{L^{2p_1}_\gamma}
$$
and 
$$
I.2  = \| e^{c\int_{0}^t \|u_1(\tau)\|_{L^2}d\tau}\|_{L^{2p_1}_\gamma} \; .
$$
We have 
$$
I.1 \leq 1+\|(CT \|u_{0,1}\|_{H^s})^{(\sigma - s)/s}\|_{L^{2p_1}_\gamma} +\| (CT \|u_{0,2}\|_{H^s})^{(\sigma - s)/s}\|_{L^{2p_1}_\gamma}\; .
$$
As $u_{0,1}$ refers to $\rho$ and $u_{0,2}$ to $\mu$, we have 
$$
\|(CT \|u_{0,1}\|_{H^s})^{(\sigma - s)/s}\|_{L^{2p_1}_\gamma} = \|(CT \|u_{0,1}\|_{H^s})^{(\sigma - s)/s}\|_{L^{2p_1}_\rho}
$$
and 
$$
\| (CT \|u_{0,2}\|_{H^s})^{(\sigma - s)/s}\|_{L^{2p_1}_\gamma}= \| (CT \|u_{0,2}\|_{H^s})^{(\sigma - s)/s}\|_{L^{2p_1}_\mu} \; .
$$
As $\rho$ satisfies Gaussian large deviation estimates, we have that
$$
 \|(CT \|u_{0,1}\|_{H^s})^{(\sigma - s)/s}\|_{L^{2p_1}_\rho} \leq C(\rho) T^{(\sigma - s)/s} \sqrt{\frac{p_1 (\sigma -s )}{s}}\; .
$$
For  $\mu$, we have 
$$
\| (CT \|u_{0,2}\|_{H^s})^{(\sigma - s)/s}\|_{L^{2p_1}_\mu} = (C T)^{(\sigma-s)/s} \|u_{0,2}\|_{L^{2p_1(\sigma -s)/s}}^{(\sigma -s)/s} \; .
$$
As $\sigma$ is strictly less than $\frac{3s}2$, we have $2 p_1 (\sigma -s)/s\leq p_1$, hence $\|u_{0,2}\|_{L^{2p_1(\sigma -s)/s}}$ is well defined.

\bigskip

For $I.2$, we use that $x\mapsto e^{Tx}$ is convex, hence from Jensen's inequality,
\begin{eqnarray*}
e^{\int_{0}^t \|u_1(\tau)\|_{L^2}d\tau} &=& e^{T \cdot \frac1T \int_{0}^t \|u_1(\tau)\|_{L^2}d\tau} \\
 & \leq & \frac1T \int_{0}^t e^{T \|u_1(\tau)\|_{L^2}} d\tau \; .
\end{eqnarray*}
We then use Minkowski's inequality
$$
\|\frac1T \int_{0}^t e^{T \|u_1(\tau)\|_{L^2}} d\tau \|_{L^{2p_1}_\gamma} \leq \frac1T \int_{0}^T \|e^{cT \|u_1(\tau)\|_{L^2}}\|_{L^{2p_1}_\gamma} d\tau \; .
$$
As $u_1$ refers to $\rho$ and $\rho$ is invariant under the flow of BBM, we have 
$$
\|e^{cT \|u_1(\tau)\|_{L^2}}\|_{L^{2p_1}_\gamma}= \|e^{cT \|u_1(\tau)\|_{L^2}}\|_{L^{2p_1}_\rho} = \|e^{cT \|u_{0,1}\|_{L^2}}\|_{L^{2p_1}_\rho}\; .
$$
As $\rho$ has Gaussian large deviation estimates, and that for every Gaussian $X$, we have 
$$
\E (e^{r X}) = e^{c(X) r^2}
$$
we get
$$
\|e^{cT \|u_{0,1}\|_{L^2}}\|_{L^{2p_1}_\rho} \leq C(\rho) e^{c(\rho) T^2 p_1}\; .
$$
Finally, considering $\rho$ as a constant of the problem, we get
$$
I \leq \Big( 1 + C T^{(\sigma - s)/s} ( \sqrt{\frac{p_1 (\sigma -s )}{s}} + \|u_{0,2}\|_{L^{2p_1(\sigma -s)/s}}^{(\sigma -s)/s})\Big)  C e^{c T^2 p_1}\; .
$$
As 
$$
d_{0,p} (\rho, \mu^t) \leq C I \|u_{0,1}-u_{0,2}\|_{L^{p_2}_\gamma,H^s}\; ,
$$
we conclude by taking the infimum over $\gamma$. \end{proof}

\begin{remark}As the constant in the estimate is given by \eqref{estistab}:
$$
C(p_1,t,\mu,\sigma) =  C \Big( 1 + C T^{(\sigma - s)/s} ( \sqrt{\frac{p_1 (\sigma -s )}{s}} + \|u_{0,2}\|_{L^{2p_1(\sigma -s)/s}}^{(\sigma -s)/s}\Big)  e^{c T^2 p_1}\; .
$$
If $\mu$ is such that for all $q$
$$
d_{s,q}(\mu, \rho) \leq C^{q} \varepsilon
$$
with $C$ bigger than $1$, and $\varepsilon $ a small parameter,  we get that
$$
d_{0,p}(\mu^t, \rho) \leq (C(t, \sigma))^{p_1} d_{s,p_2} (\mu, \rho) \; .
$$
By taking $p_1 = p_2 = 2p$, we get 
$$
d_{0,p}(\mu^t, \rho) \leq (C'(t, \sigma))^{p} \varepsilon \; .
$$
Hence, thanks to Proposition \ref{prop-alde} with $\alpha =1$, the invariant measure is stable in the set of measure 
$$
\{\mu \; |\; \exists \delta >0 \; ; \; 1_{\|u\|_{H^s}\geq 1}e^{\delta (\ln \|u\|_{H^s})^2} \in L^1_\mu \}\; .
$$
\end{remark}

\begin{remark} We know compare Theorem \ref{theo-result2} with the result of \cite{dSwav}. If $\mu$ and $\nu$ are two Gaussian variables on $H^s$ with respective covariance operators $M_1^*M_1$ and $M^*_2M_2$, then if $X$ has the law $\mu$, $M_2M_1^{-1}X$ has for law $\nu$, or in other words $\nu =( M_2 M_1^{-1})^* \mu$ . Then, if $\gamma$ is defined as 
$$
\gamma(A\times B) = \mu( A \cap M_2^{-1}M_1 B)
$$
we have 
$$
\gamma(A \times H^s) = \mu (A) \mbox{ and } \gamma(H^s \times B) = \mu ( M_2^{-1}M_1 B)= \nu (B)\; ,
$$
in other words $\gamma  \in \textrm{Marg}(\mu, \nu)$. Hence, we have 
$$
d_{s,p} (\mu, \nu) \leq \| \; \|u - v\|_{H^s}\|_{L^p_\gamma} = \|(\mbox{Id}-M_2 M_1^{-1})u\|_{L^p(d\mu(u), H^s)}
$$
and using the operator norm of $\mbox{Id}-M_2 M_1^{-1}$ if it exists, we get
$$
d_{s,p} (\mu, \nu) \leq \|\mbox{Id}-M_2 M_1^{-1}\|_{H^s \rightarrow H^s} \|u\|_{L^p_\mu,H^s}\; .
$$
In the case of \cite{dSwav}, $M_1 = L  = (1- \partial_x^2)^{-1/2}$ and $M_2 = (1+V)^{1/2} L$, thus $M_2 M_1^{-1} = (1+V)^{1/2}$. Hence, the distance between $\mu$ and $\nu$ is controlled by the operator norm of $\mbox{Id} - (1+V)^{1/2}$ and in the case of the operators considered in \cite{dSwav}, it has the size of the perturbation parameter $V$, $|V|$. Hence,
$$
d_{0,p}(\mu^t,\nu^t) \leq C(t,\sigma, p_1,\mu,\nu) |V|
$$
which means that  the result of \cite{dSwav} has been extended.
\end{remark}

\bibliographystyle{amsplain}
\bibliography{bibbbm2} 
\nocite{*}

\end{document}